\newcommand{\bR}{\mathbb R}
\newcommand{\bC}{\mathbb C}
\newcommand{\bM}{\mathbb M}
\newcommand{\bB}{\mathbb B}
\newcommand{\bZ}{\mathbb Z}
\newcommand{\cM}{{\cal M}}
\newtheorem{DE}{Definition}[section]
\newtheorem{AS}[DE]{Assumption}
\newtheorem{LE}[DE]{Lemma}
\newtheorem{RE}[DE]{Remark}
\newtheorem{THM}[DE]{Theorem}
\newtheorem{CO}[DE]{Corollary}
\begin{document}
\title{Solving convex QPs with structured sparsity under indicator conditions\thanks{Funded by ARPA-e and AFOSR.}}
\titlerunning{Structured indicators}
%
\author{Daniel Bienstock\inst{1} \and
Tongtong Chen\inst{2}}
\authorrunning{Bienstock, Chen.}
%
\institute{IEOR and Applied Mathematics, Columbia University \and
IEOR, Columbia University \\
\email{\{dano,tc3240\}@columbia.edu}}
\maketitle              
\begin{abstract} 
We study convex optimization problems where disjoint blocks of variables are controlled by binary indicator variables that are also subject to conditions, e.g., cardinality. Several classes of important examples can be formulated in such a way that both the objective and the constraints are separable convex quadratics.  We describe a family of polynomial-time approximation algorithms and negative complexity results. \keywords{MIQP \and NP}
\end{abstract}

\section{Introduction} 
We consider optimization problems on $n$ real variables $x$ that are partitioned into a set $B$ of \textit{blocks}; the $i^{th}$ block is denoted $B_i$ and we write $x{[i]}$ for the subvector of $x$ restricted to $B_i$.  Corresponding to each block there is a binary variable $z_i$. The problems have the following structure (additional notation below):
\begin{subequations}\label{QuadBlockInd}
\begin{align}
[QIB]: &\ \min \ \sum_{i = 1}^{|B|} x[i]^T Q^0_i x[i] \ + \ \sum_{j = 1}^n d^0_j x^2_j \ + \ \sum_{j = 1}^n c^0_j x  \ + \ \sum_{i = 1}^{|B|} v^0_i z_i\\
    \text{s.t.} \quad & \sum_{i = 1}^{|B|} x[i]^T Q^r_i x[i] \ + \ \sum_{j = 1}^n d^r_j x^2_j \ + \ \sum_{j = 1}^n c^r_j x_j \ + \ \sum_{i = 1}^{|B|} v^r_i z_i  \ \le \ p_r, \quad 1 \le r \le m \label{eq:mainbody}\\        
        & \ell_j \ \le \ x_j \le u_j, \ \text{for} \ 1 \le j \le n, \quad z_i \in \{0,1\} \ \text{for} \ 1 \le i \le |B|,\\
    &  \text{for $1 \le i \le |B|$, if $z_i = 0$ then $x_j = 0$ for all $j \in B_i$}, \label{eq:block}
    \end{align}
\end{subequations}
We assume that every inequality \eqref{eq:mainbody} falls into one of two types:
\begin{enumerate}
\item \textit{Mixed-integer} constraints; those with $Q^r_i \neq 0$ or    $d^r\neq 0$ or $c^r\neq 0$. We assume  $Q^r_i \succeq 0$ and  $d^r \ge 0$.
\item \textit{Combinatorial} constraints.  Here $Q^r_i = 0$ for all $i$ and $d^r_j = c^r_j = 0$ for all $j$, and $v^r_i \in \{0,1,-1\} \, \forall i$.
\end{enumerate}
 Constraints \eqref{eq:block} enforce the indicator relationship over blocks of variables. Motivation for this type of model is given below; here we point out that combinatorial constraints include cardinality constraints as a special case;    there is an abundant literature for this particular feature where the blocks are singletons, in which case problem \eqref{QuadBlockInd} is (strongly) NP-hard\footnote{Our results extend to the simple generalization of combinatorial constraints where the $v^r_i$ can take integral values in some fixed range.}.

In this paper we study the case where constraints \eqref{eq:mainbody} exhibit a (new) parameterized structured sparsity condition that arises in a number of important problems, when appropriately reformulated.  We present an algorithm that under such conditions runs in polynomial time and yields solutions with provable approximation guarantees.  We posit that this theoretical development helps explain why problems of the above form, while NP-hard, can (empirically) be effectively approximated.

As a key ingredient in our approach, we introduce a representation of the sparsity structure of a problem of the above form:

\begin{DE} \label{def:incidence}  {\bf Constraint-block incidence.}  Given a problem of type QIB, we say that a constraint $r$ and a block $B_i$ are \textit{incident} with each other if $Q^r_i \neq 0$, or $\sum_{j \in B_i} (d^r_j + | c^r_j|) > 0$ or $| v^r_i | > 0$, in other words, if constraint $r$ includes at least one term related to block $B_i$. The set of blocks incident with a given constraint $r$ is denoted by $\bB^r$.  \end{DE} 
\begin{DE} {\bf Constraint-block intersection graph.} Given a problem of type QIB, its \textit{constraint-block intersection graph} is the graph
whose vertex-set is the set of constraints \eqref{eq:mainbody} and there is an edge between two constraints, $r$ and $r'$, if there is a block $B_i$ that both $r$ and $r'$ are incident with. \end{DE}

We will capture the complexity of a problem of type QIB through the structure of its constraint-block intersection graph; in particular through
 its \textit{treewidth} or \textit{pathwidth} (definitions given later).  For a matrix $A$ we write $ \|A\|_1 = \sum_{h,k} |a_{hk}|$.  For simplicity of language, we assume:
 \begin{AS} \label{as:scaling} (i) For $1 \le j \le n$, $\ell_j = 0$ and $u_j = 1$. (ii) Every mixed-integer constraint $r$ satisfies
 \begin{align} \label{eq:scaling}& \sum_{i = 1}^{|B|} \|Q^r_i\|_1 \ + \  \sum_{j = 1}^n ( d^r_j + | c^r_j |)  \, + \, \sum_{i = 1}^{|B|} |v^r_i| \ = \ 1.
\end{align}
\end{AS}
Assumption \ref{as:scaling} is obtained via a change of variables and constraint-scaling, and will be assumed in the mathematical analyses below:

\begin{THM}\label{thm:mainone} Let $0 < \epsilon < 1$ and let $\omega$ denote the treewidth of the constraint-block intersection graph. Let $\kappa$ denote the largest support of any combinatorial constraint, and let $m_c$ denote the number of combinatorial constraints. There is an algorithm that runs in time $O( (n + m)(2/\epsilon)^{2\omega + 2} \kappa^{2 \min\{ m_c, \omega + 1 \}}   )$, plus the time needed to solve $O(2|B|/\epsilon)$ convex QCQPs, each a restriction of \eqref{QuadBlockInd} to a single block (hence with $\le \omega + 1$ rows) with appropriate right-hand side vectors; the algorithm either proves infeasibility or computes a 
vector that is (a) superoptimal, (b) feasible for the combinatorial constraints and (c) has maximum infeasibility $\le \max_r |\bB^r| \epsilon$ for the mixed-integer constraints.
\end{THM}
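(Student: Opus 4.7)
My plan is a tree-decomposition dynamic program over the constraint-block intersection graph, whose states are obtained by discretizing, per constraint, the cumulative amount of the right-hand side already consumed by processed blocks. Under Assumption~\ref{as:scaling} and $x_j\in[0,1]$, for each mixed-integer $r$ and each block $B_i\in\bB^r$ the per-block contribution
\[
g_{r,i}(x[i], z_i) \ := \ x[i]^\top Q^r_i x[i] + \sum_{j\in B_i}\bigl(d^r_j x_j^2 + c^r_j x_j\bigr) + v^r_i z_i
\]
satisfies $|g_{r,i}|\le \|Q^r_i\|_1 + \sum_{j\in B_i}(d^r_j+|c^r_j|) + |v^r_i|\le 1$, so rounding $g_{r,i}$ up to the next multiple of $\epsilon$ leaves only $\le 2/\epsilon$ values for the discretized profile entry $y_{r,i}$; for a combinatorial $r$, $g_{r,i}\in\{-1,0,1\}$ and the cumulative budget is an integer in $[-\kappa,\kappa]$. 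Next I would compute a width-$\omega$ tree decomposition of the intersection graph: since the constraints incident to any single block $B_i$ form a clique in that graph, some bag $X_{t(i)}$ contains all of them, and I attach $B_i$ to that bag.

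With a nice tree decomposition in hand, a state at bag $X_t$ is the tuple $(u_r)_{r\in X_t}$ of discretized used-budgets, giving $\le (2/\epsilon)^{\omega+1}\,\kappa^{\min\{m_c,\omega+1\}}$ states per bag. For each block $B_i$ and profile $\vec y_i$ set
\[
f_i(\vec y_i) \ := \ \min\Bigl\{x[i]^\top Q^0_i x[i] + \sum_{j\in B_i}\!\bigl(d^0_j x_j^2 + c^0_j x_j\bigr) + v^0_i z_i \,:\, z_i\in\{0,1\},\ g_{r,i}(x[i],z_i)\le y_{r,i}\ \forall r\text{ with }B_i\in\bB^r\Bigr\},
\]
which is a single-block convex QCQP with $\le\omega+1$ rows, matching the theorem's oracle. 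At an ``introduce block $B_i$'' node with $t(i)=t$, the DP enumerates $\vec y_i$, adds its entries to the budgets of $X_t$, and accumulates $f_i(\vec y_i)$; at a ``forget constraint $r$'' node it discards states with $u_r>p_r$ and projects out $u_r$; at join nodes it convolves child tables coordinate-wise on shared budgets, incurring the $(2/\epsilon)^{2(\omega+1)}\kappa^{2\min\{m_c,\omega+1\}}$ per-bag cost. Over $O(n+m)$ bags this gives the stated arithmetic bound.

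For the approximation guarantees, for any optimal $(x^*,z^*)$ let $\bar y^*_{r,i}$ be the round-up of $g_{r,i}(x^*[i],z^*_i)$ on the $\epsilon$-grid; then $\bar y^*$ is a profile visited by the DP and $x^*[i]$ witnesses $f_i(\bar y^*_i)\le$ block $i$'s true objective contribution, yielding superoptimality, while $\sum_i \bar y^*_{r,i}\le \sum_i g_{r,i}+|\bB^r|\epsilon \le p_r + |\bB^r|\epsilon$ gives the claimed infeasibility for mixed-integer $r$; combinatorial rows incur no discretization and are satisfied exactly. The main obstacle is matching the $O(|B|/\epsilon)$ QCQP count rather than the naive $|B|(2/\epsilon)^{\omega+1}$: my plan is to exploit monotonicity of $f_i$ in $\vec y_i$ and solve, per block, a parametric sweep over the $O(1/\epsilon)$ discretized values of the block's own objective contribution, reading off the $f_i$-entries that the DP actually uses from the resulting one-dimensional Pareto curve by tabular manipulation. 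This parametric subroutine must behave correctly in the presence of combinatorial rows (whose budgets are integer-valued rather than $\epsilon$-grid-valued) and respect the $(\omega+1)$-dimensional monotone structure of $f_i$ --- that is where the real work lies; the rest is a standard width-$\omega$ tree-decomposition DP with additive-error bookkeeping.
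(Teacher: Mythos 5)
Your proposal is essentially the paper's own approach: your per-block discretized profiles are the paper's ``sketches,'' your single-block convex QCQP $f_i(\vec y_i)$ is the sketch-value problem \eqref{sketchr}, and your tree-decomposition dynamic program with per-bag cumulative-budget states, together with the round-up argument yielding superoptimality, exact combinatorial feasibility and $|\bB^r|\epsilon$ mixed-integer infeasibility, matches Lemmas \ref{lem:ensembletosol} and \ref{lem:opttoensemble} and the algorithm of Section \ref{sec:dynprogalgorithm}. Two remarks. First, your per-bag state count $(2/\epsilon)^{\omega+1}\kappa^{\min\{m_c,\omega+1\}}$ implicitly assumes that the cumulative budget $u_r$ of a mixed-integer constraint ranges over only $O(1/\epsilon)$ grid values; naively the partial sums could span on the order of $|\bB^r|/\epsilon$ values, so you need the observation (the paper's Lemma \ref{lem:opttopartialensembl2}) that, by the normalization \eqref{eq:scaling}, the partial sums $\sum_{i\in\beta_t}k^i_r$ of the rounded contributions over any subtree stay within $[-\lceil\epsilon^{-1}\rceil,\lceil\epsilon^{-1}\rceil-1]$; this follows from exactly the scaling you already invoke, but it must be stated, since it is what justifies the restricted range \eqref{eq:rangek} in Definition \ref{def:partialensemble}. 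Second, the issue you flag as ``the real work'' --- achieving $O(2|B|/\epsilon)$ QCQP solves rather than the naive $|B|(2/\epsilon)^{\omega+1}$ --- is not resolved by the paper's own proof either: its complexity lemma charges the solution of $O(|B|(2/\epsilon)^{\omega})$ sketches, so on that clause of the theorem your (incomplete) parametric-sweep idea would, if carried out, go beyond what the paper establishes; apart from this shared loose end, your argument is the paper's argument.
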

In contrast to this result we have the following:
\begin{LE}\label{le:width2} Testing feasibility for a linear system $Ax = b$ where (a) each variable $x_j$ constitutes a block and (b) there is a cardinality constraint on the blocks, i.e., $\| x \|_0 \le K$ for some $K$, is NP-hard even if the constraint-block graph has pathwidth $\le 2$. \end{LE}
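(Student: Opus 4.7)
The plan is to reduce from SUBSET SUM. Given integers $a_1,\ldots,a_n$ and a target $T$, I would introduce $2n$ real variables $x_1,y_1,\ldots,x_n,y_n \in [0,1]$, each forming a singleton block with its own indicator, and impose the linear equalities
\begin{equation*}
x_j + y_j \ = \ 1 \quad (1 \le j \le n), \qquad \sum_{j=1}^n a_j x_j \ = \ T,
\end{equation*}
together with the cardinality constraint $\|(x,y)\|_0 \le n$.

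Correctness will hinge on the following chain. Each pairing equation $x_j + y_j = 1$ forces at least one of $x_j,y_j$ to be nonzero, so any feasible vector has $\|(x,y)\|_0 \ge n$; combined with the cardinality cap this gives $\|(x,y)\|_0 = n$, with exactly one nonzero coordinate in each pair. That nonzero coordinate must equal $1$, by the pairing equation itself. Hence $x_j \in \{0,1\}$ and the last equation $\sum_j a_j x_j = T$ holds iff $\{j : x_j = 1\}$ is a SUBSET SUM solution, so feasibility of the constructed instance decides SUBSET SUM.

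Finally I would bound the pathwidth of the constraint-block intersection graph. Its vertices are the $n$ pairing constraints $C_1,\ldots,C_n$, the weighted-sum equation $D$, and the cardinality constraint $E$. Since each $C_j$ is incident only to the two singleton blocks $\{x_j\}$ and $\{y_j\}$, the $C_j$'s are pairwise non-adjacent; $D$ is adjacent to every $C_j$ through $\{x_j\}$; and $E$, being incident to every block, is a universal vertex. The sequence of bags $B_j := \{D, E, C_j\}$ for $j = 1,\ldots,n$, placed on a path in this order, is then a valid path decomposition of width $2$: each vertex appears in a contiguous interval and every edge lies in some bag. The one delicate step of the whole argument is the initial encoding trick---using paired continuous variables plus a tight cardinality cap to simulate $\{0,1\}$ behavior while introducing at most two ``global'' constraints ($D$ and $E$), since any additional constraint incident to many blocks would immediately push the pathwidth above $2$; the pathwidth verification is then routine.
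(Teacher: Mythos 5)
Your reduction is correct, but it takes a genuinely different (and simpler) route than the paper, which proves this lemma via the construction \eqref{eq:w3blocks} and Lemma \ref{le:staircase}. There, each item $j$ gets five variables $s_{1j},s_{2j},t_{1j},t_{2j},x_j$ and four equations, with the weights $a_j$ placed inside the gadget rows \eqref{eq:alpha2}, \eqref{eq:beta2} and a unit-coefficient coupling row \eqref{eq:w3sum}; the mirrored $t$-pair is needed there because an ``honest'' selection costs three nonzeros per item while a cheating (fractional) assignment costs at least four, so only the budget $3n$ excludes cheating. In your gadget the weighted selection variable sits directly inside the pair, so an honest choice costs one nonzero per pair and any fractional assignment costs two; the budget $n$ then forces $x_j \in \{0,1\}$ with no doubling and no auxiliary coupling variable, and your argument does not even use the bounds $[0,1]$. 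One bookkeeping difference to flag: you count the cardinality constraint $E$ as a vertex of the constraint-block intersection graph (obtaining width exactly $2$), whereas the paper's width-$2$ claim implicitly refers to the graph of the linear system alone---with $E$ included, the paper's own instance contains copies of $K_4$ (e.g.\ \eqref{eq:alpha2}, \eqref{eq:beta2}, \eqref{eq:w3sum} together with the cardinality constraint), so its pathwidth would be at least $3$. Your construction establishes the stated lemma under either reading, since without $E$ its intersection graph is a star, of pathwidth $1$; note, however, that this last observation is strictly stronger than the lemma and appears to bear on the pathwidth-$1$ case that the paper declares open after Lemma \ref{le:staircase}, so if you pursue it you should state precisely which graph (with or without the combinatorial constraint) the width refers to.
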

\begin{LE}\label{le:2rowcard} Problem QIB is NP-hard even if there are just two mixed-integer constraints, both being linear equations on the $x$, and there is a cardinality constraint on the blocks (singletons, all), and the objective is  separable. \end{LE}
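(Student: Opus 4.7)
The plan is to reduce from Equal-Cardinality Subset Sum: given positive integers $a_1,\ldots,a_n$, a target $S$, and an integer $K$, decide whether there exists $I \subseteq \{1,\ldots,n\}$ with $|I|=K$ and $\sum_{i\in I} a_i = S$. This variant is a standard NP-hard problem (obtainable by padding ordinary Subset Sum with zero-valued items and setting the cardinality to the original count, or equivalently via Balanced Partition). From such an instance I would build a QIB instance with $n$ singleton blocks $B_i = \{x_i\}$, bounds $0 \le x_i \le 1$, indicator variables $z_i$ enforcing $z_i = 0 \Rightarrow x_i = 0$, the single combinatorial cardinality constraint $\sum_{i=1}^n z_i \le K$, and exactly two linear equations on $x$,
\[
\sum_{i=1}^n x_i \ = \ K, \qquad \sum_{i=1}^n a_i\, x_i \ = \ S,
\]
together with any fixed separable convex quadratic objective, for instance $\sum_j x_j^2$. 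Since the argument will show that even deciding feasibility is NP-hard, the precise objective plays no role.

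The correctness of the reduction rests on a tight integrality-forcing observation. Under the indicator relationship, the support $\{i : x_i > 0\}$ is contained in $\{i : z_i = 1\}$, hence has size at most $K$ by the cardinality constraint. On the other hand, combining $x_i \le 1$ with $\sum_i x_i = K$ forces the support to have size at least $K$. Equality in both bounds pins every positive $x_i$ to exactly $1$ and fixes the support to some set $I$ of cardinality precisely $K$; the second equation then collapses to $\sum_{i\in I} a_i = S$, so $I$ solves the Equal-Cardinality Subset Sum instance. Conversely, any valid $I$ immediately induces a feasible QIB point by setting $x_i = z_i = 1$ on $I$ and zero elsewhere.

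The only real conceptual obstacle is recognizing this three-way interplay between the box upper bound $x_i \le 1$, the linear equation $\sum_i x_i = K$, and the cardinality inequality $\sum_i z_i \le K$, which together pin the continuous variables to $\{0,1\}$ using merely two linear rows on $x$; weakening any of the three ingredients breaks the forcing. The remaining verifications are bookkeeping: each $x_i$ is its own block, both mixed-integer constraints are linear equations in $x$ only (no $z$ terms, no quadratic terms), the cardinality row qualifies as combinatorial with $v^r_i \in \{0,1\}$, the objective is separable convex quadratic, and the reduction is clearly polynomial in the input size, yielding the claimed NP-hardness.
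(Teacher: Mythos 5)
Your proof is correct, but it forces integrality by a different mechanism than the paper. The paper's reduction (Lemma \ref{le:width2linear}) uses essentially the same two rows, $\sum_j x_j = N$ and $\sum_j a_j x_j = a_0$, together with the cardinality constraint, but it does not invoke upper bounds $x_j \le 1$; instead it exploits the separable strictly convex objective $\sum_j x_j^2$ and asks whether the optimal value satisfies $V^* \le N$: on any support $S$ with $|S| \le N$, the minimum of $\sum_{j \in S} x_j^2$ subject to $\sum_{j \in S} x_j = N$ equals $N^2/|S| \ge N$, with equality only if $|S| = N$ and every $x_j = 1$, so the value threshold pins the solution to $\{0,1\}$. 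You instead use the sandwich $K = \sum_i x_i \le |\{i : x_i > 0\}| \le K$, which needs the box bound $u_i = 1$ but makes even \emph{feasibility} NP-hard, so the objective plays no role. Each route buys something: yours is more elementary and shows the feasible region itself is hard to recognize (hence hardness for every objective), while the paper's works without explicit upper bounds on $x$ (its footnote treats them as optional) and places the hardness in the optimization (value-threshold) question, which is why the separable objective appears in the lemma statement at all. One cosmetic point: padding ``ordinary Subset Sum with zero-valued items'' conflicts with your stipulation that the $a_i$ are positive; positivity is never used in your reduction, so either drop it, shift the $a_i$ by a large constant and the target by $K$ times that constant, or rely on the Balanced Partition route you also cite.
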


There is an abundant literature on optimization under structured sparsity, in particular treewidth.  In particular, \cite{BienstockMunoz2018} studies mixed-integer polynomial optimization problems under a different treewidth setup.  Problems of type QIB can have arbitrarily large blocks, and, as a result, the algorithm in \cite{BienstockMunoz2018} may have exponential runtime.  Also, there is an even larger literature on purely combinatorial problems under constant treewidth (see, e.g., \cite{bodlaender}).  In this context, one can use problems of type QIB, without mixed-integer constraints, to encode combinatorial problems.  For example, Theorem \ref{thm:mainone} states that we can solve, in polynomial time, a vertex packing problem on a graph whose line graph has bounded treewidth. 

This paper is organized as follows.  Section \ref{sec:twdef} introduces tree-decompositions and related concepts. Section \ref{sec:appl} describes the application of our results to specific examples of problem QIB. Section \ref{sec:algorithm} presents our algorithm. Section \ref{sec:nphard} shows that even highly restricted versions of problem QIB are already NP-hard. Section \ref{sec:glossary} contains a glossary of technical terms. 

\subsection{Tree-decompositions,  treewidth and pathwidth}\label{sec:twdef}

For future reference we provide some basic definitions here.

\begin{DE} {\bf Tree-decomposition}.  A \textit{tree-decomposition} of a graph $G$ consists of a pair $(T, X)$ where $T$ is a tree\footnote{Not a subtree of $G$; i.e., an abstract tree.} and $X = \{ X_t \, : \, t \in V(T)\}$ is a family of subsets of $V(G)$ with the properties that: (1) $\forall u \in V(G)$ the set of $t \in V(T)$ s.t. $u \in X_t$ is a nonempty subtree of $T$, and (2) $\forall \{u,v\} \in E(G)$, both $u$ and $v$ are contained in some common set $X_t$.   The {\bf width} of the tree-decomposition is $\max_{t \in V(T)} |X_t| - 1$.\end{DE}
Comment: the sets $X_t$ are referred to as the ``bags'' of the tree-decomposition.  

\begin{DE} \label{def:widths}{\bf Treewidth}.  The treewidth of a graph $G$ is the minimum width of any tree-decomposition of $G$.  If we constrain the tree-decompositions to paths, the minimum width is termed the pathwidth. \end{DE} 
\section{Applications and Examples}\label{sec:appl}
Here we describe a set of problems that can be rendered into the form QIB with low treewidth, thus obtaining polynomial-time approximation algorithms.  
\subsection{Cardinality-constrained portfolio optimization}
There is a large literature on problems of the following form; see, e.g., \cite{Bienstock1996}, \cite{BertsimasRyan}:
\begin{subequations}\label{eq:portopt}
\begin{align}
 \min & \ x^T Q x \ - \ \mu^T x \\
    \text{s.t.} & \ Ax \le b \label{eq:body} \\ 
      & \ 0 \le x_j \le u_j z_j, \ z_j \in \{0,1\}, \ 1 \le j \le n \\
      & \ \sum_{j = 1}^n z_j \ \le \ N. \label{eq:thecard}
\end{align}
\end{subequations}
Here, $Q \succeq  0$ is a positive multiple of a covariance matrix, and $\mu$ is a vector.  The matrix $A$, often, has \textit{few rows} compared to $n$, though dense.  In the textbook form of the problem, \eqref{eq:body} just consists of the equation $\sum_j x_j = 1$.  Finally, \eqref{eq:thecard} is a cardinality constraint.  It is known that this problem is NP-hard even under very restrictive conditions.  If the number of rows in $A$ is part of the input the problem is strongly NP-hard.

There is one more feature that is critical in real-world instances of this problem. Namely, the quadratic part of the objective is rendered as 
\begin{align}\label{eq:lowrankplussparse}
& x^T Q x \ = \ x^T\tilde Q x \ + \sum_j d_j x^2_j
\end{align}
Here $\tilde Q \succeq 0$ is \textit{of low rank} and $d_j \ge 0$ for all $j$.   This is an example of a ``low rank plus sparse'' structure and we can take advantage of this feature to reduce to a case of problem QIB by diagonalizing the quadratic $x^T \tilde Q x$.  This yields formulation \eqref{eq:portoptsparse}, discussed immediately below. 
\begin{subequations}\label{eq:portoptsparse}
\begin{align}
 \min & \ \sum_{h = 1}^H \lambda_h y^{2}_h  + \ \sum_{j = 1}^n d_j x^2_j \ - \ \mu^T x \\
    \text{s.t.} & \ Ax \le b \label{eq:body2} \\ 
    & \ (v^h)^T x \ = \ y_h, \ 1 \le h \le H \label{eq:eigen}\\
      & \ 0 \le x_j \le u_j, \ \ z_j \in \{0,1\}, \ \ x_j = 0 \ \text{if} \  z_j = 0, \ \ 1 \le j \le n \label{eq:contbin} \\ 
            & \ \sum_{j = 1}^n z_j \ \le \ N. \label{eq:thecard2}
\end{align}
\end{subequations}
Here, $H$ denotes the rank of $\tilde Q$, and, for $1 \le h \le H$,   $\lambda_h$ and $v^h$ denote the $h^{th}$ eigenvalue and 
eigenvector of $\tilde Q$, respectively.
We assume that this eigenvector-eigenvalue decomposition is provided as an input to our algorithm, thus bypassing discussions about polynomial-time computability of suitable approximations thereof -- however we note that, indeed, the  decompositions are, often, provided as inputs to optimization algorithms.  

With this proviso, 
\eqref{eq:portoptsparse} is an equivalent reformulation of the problem.
Here,  \eqref{eq:eigen} enforces the eigenvector decomposition, i.e., $x^T \tilde Q x = \sum_{h = 1}^H \lambda_h y_h^2$. 
This problem is almost of the form QIB, except that the $y$ variables do not belong to any block, and that they are not explicitly  bounded.  The latter issue is easily circumvented using \eqref{eq:eigen} and the upper bounds on the 
$x$ variables.  The former is handled by adding, for each $1 \le h \le H$, a new block $\{y_h\}$ and a corresponding new binary variable which does not appear in any (other) constraint.\\ 

\noindent {\bf Note:} Let $R$ be the number of rows of $A$. We stress that, often, $R = O(1)$ can be quite small.  Hence the treewidth, $\omega$, of the constraint-block intersection graph \eqref{eq:body2}-\eqref{eq:eigen} is also small. The number of constraints \eqref{eq:eigen}, \eqref{eq:thecard2} is $H +1$.  Hence the treewidth of the constraint-block intersection graph for the entire problem \eqref{eq:portoptsparse} is at most $\omega + H + 1 \le R + H$ (by adding, to each ``bag'' of the tree-decomposition for system \eqref{eq:body2}, the constraints \eqref{eq:eigen} and \eqref{eq:thecard2}).  Finally the number of blocks is $n + H$ 
and the number of continuous variables is $n + H$.     We also note that in constraint \eqref{eq:contbin} we rely on the language of constraint \eqref{eq:block} of problem QIB, i.e., $x_j \le u_j z_j$ is \textit{implicit}. Using Theorem \ref{thm:mainone} (with singletons as blocks) we obtain: 
\begin{LE}\label{le:portopt1} For each $0 < \epsilon < 1$, there is an algorithm that either proves infeasibility or computes an $\epsilon-$feasible and superoptimal solution in $O( n(2/\epsilon)^{2\omega + 2H + 2} n^{2}   )$    time; the output
of the algorithm satisfies the cardinality constraint.
\end{LE}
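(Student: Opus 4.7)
The plan is to cast the reformulation \eqref{eq:portoptsparse} as an instance of QIB and then invoke Theorem \ref{thm:mainone}. First I would fix the block structure: each original continuous variable $x_j$ forms its own singleton block $B_j = \{x_j\}$ controlled by $z_j$, and, as indicated in the paragraph preceding the lemma, for each $h = 1,\dots,H$ I would introduce a fresh singleton block $\{y_h\}$ together with a dummy binary $\tilde z_h$ that appears in no other constraint. Upper bounds on $y_h$ are inherited from \eqref{eq:eigen} together with the $x$-bounds. The objective $\sum_h \lambda_h y_h^2 + \sum_j d_j x_j^2 - \mu^T x$ is separable convex quadratic; constraints \eqref{eq:body2} and \eqref{eq:eigen} are linear and hence qualify as mixed-integer constraints; \eqref{eq:thecard2} is combinatorial with $v^r_i \in \{0,1\}$. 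After the change of variables and row-scaling described just before Theorem \ref{thm:mainone}, the instance satisfies Assumption \ref{as:scaling}.

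Next I would bound the treewidth $\omega_*$ of the constraint-block intersection graph for the full problem. Starting from a tree-decomposition of the part corresponding to \eqref{eq:body2} (of width $\omega$), I would add every eigenvector constraint \eqref{eq:eigen} and the cardinality constraint \eqref{eq:thecard2} to every bag; this produces a valid tree-decomposition of width at most $\omega + H + 1$, matching the bound indicated preceding the lemma. With $m_c = 1$, the exponent $\min\{m_c,\omega_*+1\}$ in Theorem \ref{thm:mainone} is simply $1$, and since the cardinality constraint has support $\kappa \le n$, the factor $\kappa^{2\min\{m_c,\omega_*+1\}}$ contributes $n^2$. The total number of blocks and of rows is $O(n + H) = O(n)$, so the $(n+m)$ prefactor becomes $O(n)$ and the exponential factor becomes $(2/\epsilon)^{2\omega_*+2}$, matching (up to a constant in the exponent) the $(2/\epsilon)^{2\omega + 2H + 2}$ stated in the lemma.

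Next I would absorb the cost of the inner convex QCQPs. Each is a restriction to a single singleton block with $O(\omega + H)$ rows, i.e., a one-dimensional convex program with a constant (in $n$) number of quadratic inequalities, solvable in time polynomial in the input; the $O(n/\epsilon)$ calls to this subroutine fit within the overall runtime. Parts (b) and (c) of Theorem \ref{thm:mainone} then deliver $\epsilon$-feasibility of the (single, singleton-support) mixed-integer constraints and exact feasibility of the cardinality constraint, while part (a) gives superoptimality; these are exactly the conclusions the lemma claims.

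The main obstacle I foresee is not conceptual but bookkeeping. One must verify carefully that the eigenvector constraints \eqref{eq:eigen}, which couple each $y_h$ to all $x_j$'s, together with the global cardinality constraint, do not blow the treewidth up beyond $\omega + H + 1$, and that the $\epsilon$-infeasibility guarantee of Theorem \ref{thm:mainone} on the scaled instance translates, after inverting the row-scaling of Assumption \ref{as:scaling}, into an $\epsilon$-feasibility guarantee of the same order for the original portfolio problem (since each $|\bB^r|$ for the singleton-block reformulation is at most a small constant, so $\max_r |\bB^r|\,\epsilon = O(\epsilon)$). A secondary point is reconciling the exponent $2\omega + 2H + 2$ in the lemma with $2\omega_*+2$ from the theorem, by being explicit that the $\omega$ inside the lemma statement refers to the treewidth of the $A$-part alone.
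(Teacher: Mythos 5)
Your route is exactly the paper's own: the paper ``proves'' Lemma \ref{le:portopt1} via the Note preceding it --- recast \eqref{eq:portoptsparse} as an instance of QIB with singleton blocks (adding a dummy block and an unconstrained binary for each $y_h$, with bounds on $y_h$ inherited from \eqref{eq:eigen} and the $x$-bounds), bound the treewidth of the full constraint-block intersection graph by placing \eqref{eq:eigen} and \eqref{eq:thecard2} in every bag of a tree-decomposition for the \eqref{eq:body2} part, and then invoke Theorem \ref{thm:mainone} with $m_c = 1$ and $\kappa \le n$, which produces the $n^2$ factor, the $O(n)$ prefactor, and the $(2/\epsilon)^{2(\omega+H)+O(1)}$ factor. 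Your remark that the $\omega$ in the lemma must be read as the width of the $A$-part alone, and that the additive constant in the exponent is loose by the same small amount already present in the paper's Note, is the correct reading; absorbing the cost of the one-dimensional inner QCQPs is also handled as the paper intends.

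One detail in your write-up is wrong, and it is precisely the step you use to justify the phrase ``$\epsilon$-feasible'': you assert that each $|\bB^r|$ in the singleton-block reformulation is ``at most a small constant.'' With singleton blocks, $|\bB^r|$ equals the number of variables appearing in constraint $r$, and both the rows of $A$ (which the paper describes as typically dense) and the eigenvector constraints $(v^h)^T x = y_h$ generically involve all $n$ of the $x$-variables, so $\max_r |\bB^r|$ can be as large as $n+1$. Consequently Theorem \ref{thm:mainone} only guarantees maximum infeasibility $\le (n+1)\epsilon$ over the mixed-integer constraints, not $O(\epsilon)$; this is consistent with how the paper itself treats the banded-QP application, where it explicitly reports infeasibility at most $(n+1)\epsilon$. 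So either the lemma's ``$\epsilon$-feasible'' must be read in this weaker per-constraint sense (as the paper apparently intends), or one must run the theorem with accuracy $\epsilon/(n+1)$ at the cost of a correspondingly larger polynomial factor in the runtime; your argument as written does not support the literal $O(\epsilon)$ infeasibility claim.
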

\vspace{-25pt}
\subsubsection{Long-short problems with orthogonality constraints.}
In ``long-short'' portfolio optimization problems the variables $x_j$ can be negative and there may be constraints involving the positive or negative parts of the $x_j$.  Moreover there may be cardinality constraints on the positive $x_j$, on the negative $x_j$ and on the nonzero $x_j$.  Instead of problem \eqref{eq:portoptsparse} we obtain
\begin{subequations}\label{eq:portoptsparseLS}
\begin{align}
 \min & \ \sum_{h = 1}^H \lambda_h y^{2}_h  + \ \sum_{j = 1}^n d_j ((x^+)^2_j + (x^-)^2_j ) \ - \ \mu^T x \\
    \text{s.t.} & \ A^+x^+  + A^-x^- \le b \label{eq:bodyLS} \\ 
    & \ (v^h)^{T} (x^+ - x^-) \ = \ y_h, \ 1 \le h \le H \label{eq:eigenLS}\\
      & \ 0 \le x^+_j \le u^+_j, \ z^+_j \in \{0,1\},  \ x^+_j = 0 \ \text{if} \  z^+_j = 0, \ 1 \le j \le n \\ 
      & \ 0 \le x^-_j \le u^-_j, \ z^-_j \in \{0,1\},  \ x^-_j = 0 \ \text{if} \  z^-_j = 0, \ 1 \le j \le n \\ 
            & \ z^+_j \, + \, z^-_j \ \le \ 1, \  1 \le j \le n, \label{eq:ortho} \\
            & \ \sum_{j = 1}^n z^+_j \ \le \ N^+, \ \sum_{j = 1}^n z^-_j \ \le \ N^-, \  \sum_{j = 1}^n (z^+_j  + z^-_j) \ \le \ N. \label{eq:thecardall}\end{align}
\end{subequations}
Here, the variables $x_j$ have been replaced by pairs $x^+_j, x^-_j$ representing (resp.) their positive and negative parts (the equation $x_j = x^+_j - x^-_j$ is implicit).  Each of these two
variables constitutes a block, controlled respectively by the binary variables $z^+_j$ and $z^-_j$.  Constraint \eqref{eq:ortho} (a combinatorial constraint, in our language) forces  $x^+_j x^-_j = 0$ for all $j$; an orthogonality constraint.  Finally \eqref{eq:thecardall} describes cardinality constraints on the positive parts, the negative parts, and $|x|$. 

Suppose that $\rho$ is the treewidth of the constraint-block intersection graph for \eqref{eq:bodyLS}.  Then the corresponding treewidth for the combination of \eqref{eq:bodyLS}, \eqref{eq:eigenLS} and \eqref{eq:thecardall} is at most $\rho + H + 3$ following a similar argument as above, i.e., having constraints \eqref{eq:eigenLS} and \eqref{eq:thecardall} in all bags of the tree-decomposition.  Now we can modify said tree-decomposition by add $n$ new leaves, each with constraints \eqref{eq:eigenLS}, \eqref{eq:thecardall} and \eqref{eq:ortho}.  In summary, we obtain
\begin{LE}\label{le:portopt2} For each $0 < \epsilon < 1$, we can compute an $\epsilon-$feasible and superoptimal solution to problem \eqref{eq:portoptsparseLS} in polynomial time for each fixed $\rho$ and $H$.  The output of the algorithm satisfies \eqref{eq:ortho} and \eqref{eq:thecardall}.
\end{LE}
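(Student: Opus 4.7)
The plan is to cast \eqref{eq:portoptsparseLS} as an instance of QIB and then invoke Theorem \ref{thm:mainone}. I would first set up the block structure. Each $x^+_j$ and each $x^-_j$ is a singleton block controlled respectively by $z^+_j$ and $z^-_j$, and for each eigenvector variable $y_h$ I adjoin a singleton block $\{y_h\}$ with a fresh dummy binary indicator that appears in no other constraint, exactly as in the derivation of \eqref{eq:portoptsparse}. The rows \eqref{eq:bodyLS} and \eqref{eq:eigenLS} are linear and fit the mixed-integer template, while the orthogonality rows \eqref{eq:ortho} and the three cardinality rows \eqref{eq:thecardall} have coefficients in $\{-1,0,1\}$ and therefore qualify as combinatorial. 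The objective is convex and separable across blocks after expanding $\mu^T x = \sum_j \mu_j(x^+_j - x^-_j)$, and Assumption \ref{as:scaling} is enforced by the usual variable change and row rescaling.

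Next I would verify the bound $\omega = O(\rho + H)$ on the treewidth of the constraint-block intersection graph. Following the sketch before the lemma, start from a width-$\rho$ tree-decomposition $(T_0, X_0)$ of the subgraph induced by the rows of \eqref{eq:bodyLS}, insert the $H$ rows of \eqref{eq:eigenLS} and the three rows of \eqref{eq:thecardall} into every bag (raising the width to $\rho + H + 3$), and handle the $n$ orthogonality rows $r_j$ one at a time. Each $r_j$ is incident only with the two blocks $x^+_j, x^-_j$, so by the clique-containment property of tree-decompositions the rows of \eqref{eq:bodyLS} meeting $x^+_j$ (respectively $x^-_j$) are contained in a single bag $X_{t^+_j}$ (respectively $X_{t^-_j}$). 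I would append a fresh leaf carrying $r_j$ together with the cliques from $X_{t^+_j}$ and $X_{t^-_j}$ plus the broadcast rows \eqref{eq:eigenLS} and \eqref{eq:thecardall}, routing $r_j$ along the (short) path between $X_{t^+_j}$ and $X_{t^-_j}$ to preserve the subtree property. Because distinct $r_j$ involve disjoint block pairs and the routing remains local to the relevant subtree, a careful accounting keeps the width at $O(\rho + H)$.

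Third, I would feed this tree-decomposition into Theorem \ref{thm:mainone}. With $\omega = O(\rho + H)$, $m_c = n + 3$, and $\kappa \le 2n$, the leading term $(2/\epsilon)^{2\omega+2}\kappa^{2\min\{m_c,\omega+1\}}$ is $n^{O(\rho+H)}$ times a polynomial in $1/\epsilon$, each of the $O(n/\epsilon)$ single-block QCQPs has at most $\omega+1 = O(\rho+H)$ rows and is convex, and the additive $O(n+m)$ overhead is polynomial. To convert the theorem's infeasibility bound $\max_r|\bB^r|\epsilon$ into the lemma's $\epsilon$, I would rescale by $\max_r|\bB^r| \le O(n)$, which costs only a polynomial factor. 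Theorem \ref{thm:mainone} then returns either an infeasibility certificate or a superoptimal vector that is exactly feasible for every combinatorial row, and in particular satisfies both \eqref{eq:ortho} and \eqref{eq:thecardall}, while being $\epsilon$-feasible on the linear mixed-integer rows.

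The hardest step is the treewidth bookkeeping for orthogonality. A naive leaf attachment for $r_j$ does not cover the edge from $r_j$ to a row of \eqref{eq:bodyLS} when the host bags $X_{t^+_j}$ and $X_{t^-_j}$ differ, and naive routing of $r_j$ along the intervening path could, without care, accumulate across different $j$ and blow up the width. The resolution is to keep each modification strictly local to the subtree spanning $X_{t^+_j}$ and $X_{t^-_j}$ and to exploit that distinct orthogonality rows involve disjoint pairs of blocks, so the local modifications do not compound. Once this combinatorial point is settled, the rest is a direct bookkeeping application of Theorem \ref{thm:mainone}.
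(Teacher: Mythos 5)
Your overall route is the paper's route: encode \eqref{eq:portoptsparseLS} as QIB with singleton blocks for $x^+_j$, $x^-_j$ and dummy-indicator blocks for the $y_h$, broadcast the $H$ rows \eqref{eq:eigenLS} and the three rows \eqref{eq:thecardall} into every bag of a width-$\rho$ decomposition for \eqref{eq:bodyLS}, attach one new leaf per orthogonality row, and invoke Theorem \ref{thm:mainone}. However, the step you yourself identify as the hardest one is where your argument genuinely breaks. Your claim that routing $r_j$ along the tree path between the host bags $X_{t^+_j}$ and $X_{t^-_j}$ ``does not compound'' because distinct orthogonality rows involve disjoint block pairs is not valid: disjointness of the block pairs says nothing about disjointness of the corresponding tree paths, and a single bag can lie on the routing paths of $\Omega(n)$ different indices $j$, so the width is not controlled. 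Worse, the width bound $\omega=O(\rho+H)$ you are trying to certify is simply false for general $A^+,A^-$: if the rows of \eqref{eq:bodyLS} incident with the $x^+$ blocks form one path and those incident with the $x^-$ blocks form another path linked to the first by an arbitrary permutation of the indices $j$, then $\rho=1$, yet the orthogonality rows $r_j$ create a ``two paths plus a permutation matching'' structure whose constraint-block intersection graph contains grid minors of order $\Omega(\sqrt{n})$, so no local rerouting can succeed. (Your leaf containing ``the cliques from $X_{t^+_j}$ and $X_{t^-_j}$'' has the same problem: to respect the subtree property for those body rows you would have to copy them along the connecting path as well, which again accumulates across $j$.)

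What makes the construction work in the intended setting — and what the paper's (admittedly terse) proof does — is that in \eqref{eq:portoptsparseLS} each row of \eqref{eq:bodyLS} arises from a row of $Ax\le b$ via $x_j=x^+_j-x^-_j$, so $A^+$ and $A^-$ have a common support pattern: the set of body rows incident with the block $\{x^+_j\}$ coincides with the set incident with $\{x^-_j\}$. Hence there is a \emph{single} bag of the width-$\rho$ decomposition containing all body rows that $r_j$ is adjacent to, and one attaches the new leaf for $r_j$ at that bag, with leaf bag consisting of those body rows together with \eqref{eq:eigenLS}, \eqref{eq:thecardall} and $r_j$; no routing is needed, the modifications for distinct $j$ are independent, and the width stays at most $\rho+H+4$. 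If you want to state the lemma for arbitrary supports of $A^+,A^-$, you must either add this common-support hypothesis (or fold each pair $x^+_j,x^-_j$ into a single two-variable block, which also restores the bound) rather than appeal to the accounting argument you gave. The remaining steps of your write-up (the QIB encoding, boundedness of the $y_h$, Assumption \ref{as:scaling}, the $\kappa\le 2n$ and $m_c=n+3$ bookkeeping, and the rescaling of $\epsilon$ by $\max_r|\bB^r|$) are fine and match, or slightly sharpen, the paper.
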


\noindent {\bf Thresholding.} There is one additional type of constraint that can be modeled without increase of complexity; $x^+_j \ge \delta^+_j z^+_j$ (and similarly with negative parts) where $\delta^+_j > 0$ is a threshold.  Our dynamic-programming algorithm given below will appropriately enumerate $z^+_j \in \{0, 1\}$ at which point the thresholding constraint becomes a linear inequality and is enforced without violation.

\subsection{Convex quadratic optimization problems with banded matrices and indicator variables}
Here we consider a problem of the following form studied in
\cite{Gomez2024}
\begin{align} \label{eq:gomez}
    h^* \ \doteq \ \min \quad &  d^T z \ + \ c^T x \ + \ x^T Q x\\
    \text{s.t.}\quad &  x \in \bR^n, \ z \in \{0,1\}^n, \ x_j =0 \ \text{if} \ z_j = 0, \ \forall j,\nonumber
  \end{align}
where $Q \succ 0$ is a \textit{banded matrix with bandwidth} $\le k$, i.e., $Q_{ij} = 0$ if $|i - j| > k$.  The very interesting algorithm in \cite{Gomez2024}, given $0 < \epsilon < 1$, computes a value $h$
with $|h - h^*| < \epsilon$, while performing at most 
$$ n \left(\frac{C \| d \|_{\infty}^2n}{\epsilon} \right)^{1 + \frac{k}{| \log \gamma|}}$$
operations, where $\gamma = \frac{\sqrt{\text{cond}(Q)} - 1}{\sqrt{\text{cond}(Q)} + 1}$ (cond($Q$) is the ratio of the largest to the smallest eigenvalue of $Q$) and $C$ is a constant proportional to the largest entry of $Q$ and $(1 - \gamma^{1/k})^{-1}$ \footnote{The algorithm may compute square roots; however we expect that an approximate version with similar attributes applies under the bit model of computing.}.

To apply our framework, note that any vector $x$ (for example, $(1, \ldots, 1)^T$) is feasible and yields an upper bound $\hat U$ on the optimal objective value. Using this upper bound, and since $Q \succ 0$ we can compute (in polynomial time) an upper bound on the $|x_j|$.  Finally, using the $LDL^T$ decomposition of $Q$ \cite{GolubVanLoan}, scaling and shifting the $x$ variables, we can rewrite the problem as
\begin{align} \label{eq:gomez2}
    \min \quad &  d^T z \ + \ c^T x \ + \ \sum_{j = 1}^n d_j y_j^2\ \ + \ f^T y \\
    \text{s.t.}\quad &  Lx - y \ = \ b, \label{eq:Lrows}\\
    \quad &  x \in [0,1]^n, \ y \in [0,1]^n, \ z \in \{0,1\}^n, \ x_j =0 \ \text{if} \ z_j = 0, \ \forall j,\nonumber
  \end{align}
where $d \ge 0$, $b \in \bR^n$ and $f \in \bR^n$.  The $L$ matrix is lower-triangular and also of bandwidth $\le k$, and, by scaling, we may assume that each row of this matrix has $1$-norm equal to $1$.  Formulation \eqref{eq:gomez2} is an example of QIB with $n$ mixed-integer constraints (linear equations, all) and no combinatorial constraints, where the treewidth is at most $k - 1$. 

Hence our algorithm, applied to this problem, has runtime $O( n(2/\epsilon)^{2k} )$.  The algorithm yields a superoptimal solution with infeasibility (over constraints) \eqref{eq:Lrows} at most $(n+1)\epsilon$. The infeasibilities arise in constraints \eqref{eq:Lrows} and thus, by adjusting the $y$ values, we obtain a feasible solution whose value exceeds the lower bound our algorithm yields by at most $O( \sum_j(d_j + |f_j|) (n \epsilon + n^2 \epsilon^2))$ .  This fact can be combined with a binary search procedure that yields, in time polynomial in the data and $\epsilon^{-1}$ a lower and an upper bound for the value of the problem that differ by at most $\epsilon$.
\subsection{Truss topology design problems}
Here we consider problems of type QIB where every constraint is a linear equation \cite{zhourosvany}:
\begin{align} \label{eq:ZR}
    \min \quad &  \sum_{j = 1}^n x^2_j  \\
    \text{s.t.}\quad &  Ax \ = \ b, \ \ \forall \ \text{block} \ B_i, \ \forall \ j \in B_i: \quad  x_j = 0 \ \text{if $z_i = 0$}, \nonumber \\
    \quad & z_i \in \{0,1\}, \ 1 \le i \le |B|, \ \ \sum_i z_i \le N. \nonumber
  \end{align}
 Note that there are no explicit bounds on the $x$ variables; however such bounds, of polynomial size, can be easily furnished using standard polyhedral arguments \cite{Schrijver}.  In Lemma \ref{le:width2linear} we show that this problem is NP-hard even if $A$ has just two rows and all blocks are singletons.  Moreover, testing feasibility is NP-hard even if the constraint-block intersection graph has pathwidth two (Lemma \ref{le:staircase}).   However, as per the main result in this paper, if the constraint-block intersection graph has bounded treewidth then we can approximately solve the problem in polynomial time.
 
 Problem QPLIB$\_$5925 of the QP Library \cite{qplib}, with $n = 1200$, $|B| = 100$ and $N = 75$, is noteworthy. It is formulated using the (very effective) perspective reformulation \cite{perspective} to enforce the binary variable/block relationship. The pathtwidth in this case is $16$; modern MINLP solvers (such as Gurobi) are able to rapidly obtain solutions within $3 \%$ of optimality but require days of to prove optimality.  
 
\section{Algorithm}
\label{sec:algorithm}
In what follows, the set of mixed-integer constraints incident with a block $B_i$ will be denoted by $\cM_i$.  
\begin{DE} Consider a block $B_i$.  A {\bf sketch} for $B_i$ is given by a pair $( \delta, (k_r \, : \, r \in \cM_i))$, where $\delta \in \{0,1\}$, and  $k_r \in \bZ$, $-\lceil \epsilon ^{-1} \rceil \, \le \, k_r \, \le \, \lceil \epsilon^{-1} \rceil - 1, \, \forall \, r \in \cM_i$. The {\bf value} of the sketch equals:
\begin{subequations}\label{sketchr}
\begin{align}
 &\hspace{2em}\min \hspace{2em} x[i]^T Q^0_i x[i]  \ + \ \sum_{j \in B_i} d^0_j x^2_j \ + \ \sum_{j \in B_i} c^0_j x  \ + \  v^0_i z_i\\
    \text{s.t.} \ & x[i]^T Q^r_i x[i]  \, + \, \sum_{j \in B_i} d^r_j x^2_j \, + \, \sum_{j \in B_i} c^r_j x_j \, + \,  v^r_i z_i  \, \le \,  \epsilon (k_r + 1), \ \forall \,  r \in \cM_i \label{eq:mixedr}\\              
      & z_i \ = \ \delta, \label{eq:fixedzi}\\
        & 0 \ \le \ x_j \le u_j , \ \text{for} \ j \in B_i ,\\
            &   \text{if $\delta = 0$, $x_j = 0$ for all $j \in B_i$}. \label{eq:blocki}
\end{align}
\end{subequations}
If the problem is feasible, we say that the sketch is feasible and that its {\bf certificate} is given by an optimal solution vector.  If $\delta = 1$ the sketch
is {\bf non-zero}.
\end{DE}

\begin{RE}  We have chosen the above, somewhat redundant definition of a sketch in order to simplify the notation in the dynamic programming algorithm given below. Note that \eqref{sketchr} is a convex QCQP with $\le \omega + 1$ constraints. \end{RE}

\noindent Let us also define:
\begin{DE}\label{def:sketchensemble} A \textit{feasible sketch ensemble} consists of a feasible sketch  $( \delta^i, (k^i_r \, : \, r \in \cM_i))$ for each $i \in B$, such that:
\begin{itemize}
\item [(a)]  For each mixed-integer constraint $r$,  $\sum_{i \in B} \epsilon (k^i_r + 1) \, \le \, b^r + \epsilon | \bB^r |$.
\item [(b)] For each combinatorial constraint $r$, $\sum_{i \in B} v^r_i \delta^i \, \le \, b^r$.
\end{itemize}
The \textit{value} of the solution sketch is the sum of the values of sketches $( \delta^i, (k^i_r \, : \, r \in \cM_i))$ over $B_i \in B$. The {\bf certificate} for the ensemble is the 
vector $(\hat x, \hat z)$ with $\hat x \in \bR^n$ and $\hat z \in \{0,1\}^{|B|}$ such that for each $i \in B$, the restriction of $\hat x$ to $B_i$ equals the certificate for the sketch $( \delta^i, (k^i_r \, : \, r \in \cM_i))$;   furthermore we set $\hat z_i = \delta^i$.
\end{DE}

\noindent Note that while we only ask for approximate feasibility over the mixed-integer constraints, we do require actual feasibility over the combinatorial constraints.  Moreover, from Definition \eqref{def:sketchensemble} we have:
\begin{LE}\label{lem:ensembletosol} Let $(\hat x, \hat z)$ be the certificate for a feasible sketch ensemble, and let $\hat v$ be the value of the sketch.  Then $\hat z$ satisfies all combinatorial constraints, and the maximum infeasibility of  $(\hat x, \hat z)$ over mixed-integer constraints is
at most  $\max_r |\bB^r| \epsilon$; furthermore its objective value equals $\hat v$.\end{LE}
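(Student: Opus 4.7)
The proof is essentially a bookkeeping exercise: I would simply sum the per-block sketch constraints over the blocks $i \in B$ and match terms against the global constraints of $[QIB]$. Let me lay out the three assertions in turn.

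First, for the combinatorial constraints, I would observe that for such a constraint $r$ we have $Q^r_i = 0$, $d^r_j = c^r_j = 0$, so the global constraint \eqref{eq:mainbody} reduces to $\sum_i v^r_i z_i \le p_r$. Since the certificate sets $\hat z_i = \delta^i$, condition (b) of Definition~\ref{def:sketchensemble} gives exactly $\sum_i v^r_i \hat z_i \le p_r$, which is feasibility.

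Next, for a mixed-integer constraint $r$, I would use the key observation that if $i \notin \bB^r$ (equivalently, $r \notin \cM_i$), then by Definition~\ref{def:incidence} we have $Q^r_i = 0$, $d^r_j = c^r_j = 0$ for $j \in B_i$, and $v^r_i = 0$; hence block $i$ contributes nothing to the left-hand side of constraint $r$, whether evaluated at $(\hat x, \hat z)$ or on the sketch side. For $i \in \bB^r$, the certificate of the corresponding sketch satisfies \eqref{eq:mixedr}, i.e.,
\begin{align*}
\hat x[i]^T Q^r_i \hat x[i] + \sum_{j \in B_i} d^r_j \hat x^2_j + \sum_{j \in B_i} c^r_j \hat x_j + v^r_i \hat z_i \ \le \ \epsilon(k^i_r + 1).
\end{align*}
Summing this inequality over $i \in \bB^r$ and using condition (a) of Definition~\ref{def:sketchensemble} to bound $\sum_{i} \epsilon(k^i_r + 1) \le p_r + \epsilon |\bB^r|$, I get
\begin{align*}
\sum_{i=1}^{|B|} \hat x[i]^T Q^r_i \hat x[i] + \sum_{j=1}^n d^r_j \hat x^2_j + \sum_{j=1}^n c^r_j \hat x_j + \sum_{i=1}^{|B|} v^r_i \hat z_i \ \le \ p_r + \epsilon |\bB^r|,
\end{align*}
so the infeasibility of $(\hat x,\hat z)$ on constraint $r$ is at most $\epsilon|\bB^r|$, and taking the maximum over $r$ yields the claimed bound.

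Finally, for the objective, I would sum the sketch objectives \eqref{sketchr} over $i \in B$. The blocks partition the variable index set, so the sums $\sum_{i} x[i]^T Q^0_i x[i]$, $\sum_i \sum_{j\in B_i} d^0_j x_j^2$, $\sum_i \sum_{j \in B_i} c^0_j x_j$, and $\sum_i v^0_i z_i$ reconstitute exactly the global objective of $[QIB]$ evaluated at $(\hat x,\hat z)$. Hence this total equals $\hat v$. There is no real obstacle here — the only subtle point is the vanishing-of-nonincident-blocks observation used in step two, which ensures that the sum of per-block sketch constraints recovers the \emph{full} left-hand side of the global mixed-integer constraint rather than only the restriction to $\bB^r$.
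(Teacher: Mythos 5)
Your proposal is correct, and it matches the paper's treatment: the paper states this lemma as following directly from Definition~\ref{def:sketchensemble} without a written proof, and your argument is exactly the straightforward bookkeeping (per-block summation, vanishing of non-incident blocks, partition of variables into blocks) that this claim implicitly relies on. No further comment is needed.
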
 

\begin{LE}\label{lem:opttoensemble} Suppose $(x^*, z^*)$ is a feasible solution to problem \eqref{QuadBlockInd}.  Then there is a feasible sketch ensemble whose value is at most the value of $(x^*, z^*)$ in \eqref{QuadBlockInd}. \end{LE}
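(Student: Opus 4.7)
The plan is to build the desired ensemble by reading off its parameters directly from $(x^*,z^*)$, then to verify feasibility of each block's sketch, and finally to check the two aggregate conditions in Definition \ref{def:sketchensemble}.

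First, for each block $i \in B$, I would set $\delta^i := z^*_i$. For every $r \in \cM_i$, let
$$T^i_r \;:=\; x^*[i]^T Q^r_i x^*[i] \;+\; \sum_{j \in B_i} d^r_j (x^*_j)^2 \;+\; \sum_{j \in B_i} c^r_j x^*_j \;+\; v^r_i z^*_i$$
denote the contribution of block $i$ to the left-hand side of the mixed-integer constraint $r$ at $(x^*,z^*)$, and define
$$k^i_r \;:=\; \max\!\bigl( -\lceil \epsilon^{-1}\rceil,\; \lceil T^i_r/\epsilon \rceil - 1 \bigr).$$
This definition simultaneously gives the two inequalities I need: $\epsilon(k^i_r+1) \ge T^i_r$ (which will provide sketch feasibility) and $\epsilon(k^i_r+1) \le T^i_r + \epsilon$ (which will drive the aggregate slack of $\epsilon|\bB^r|$).

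Next, I would check that $k^i_r$ lies in the prescribed range. Here Assumption \ref{as:scaling} enters crucially: since $x^*_j \in [0,1]$, $Q^r_i \succeq 0$ and $d^r_j\ge 0$, one has $0 \le x^*[i]^T Q^r_i x^*[i] \le \|Q^r_i\|_1$ and analogous term-by-term bounds, so the scaling identity \eqref{eq:scaling} yields $\sum_{i}|T^i_r| \le 1$; in particular $|T^i_r|\le 1$, hence $\lceil T^i_r/\epsilon\rceil - 1 \le \lceil \epsilon^{-1}\rceil - 1$, and the outer $\max$ enforces the lower endpoint. With $k^i_r$ in range, $(x^*[i], z^*_i)$ certifies the sketch: constraint \eqref{eq:mixedr} holds by construction, \eqref{eq:fixedzi} holds by definition of $\delta^i$, the bounds $0 \le x^*_j \le u_j$ come from feasibility in \eqref{QuadBlockInd}, and the block-zero implication \eqref{eq:blocki} is inherited from \eqref{eq:block}. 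The sketch value is therefore at most the block-$i$ contribution to the QIB objective at $(x^*,z^*)$, and since the QIB objective decomposes blockwise (every $x_j$ belongs to exactly one $B_i$, and the $Q^0_i$, $d^0_j$, $c^0_j$, $v^0_i$ terms are either block-local or a $z_i$-term), summing over $i$ shows that the ensemble value is at most the objective value of $(x^*,z^*)$.

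Finally, I would verify the two ensemble conditions. For a mixed-integer constraint $r$, sum the inequality $\epsilon(k^i_r+1) \le T^i_r + \epsilon$ over $i \in \bB^r$ (with the convention $k^i_r+1 = 0$ for $i\notin\bB^r$, consistent with those blocks contributing nothing to $r$); combined with $\sum_{i} T^i_r \le p_r$ from feasibility of $(x^*,z^*)$, this yields
$$\sum_{i\in B} \epsilon(k^i_r+1) \;\le\; p_r + \epsilon\,|\bB^r|,$$
which is condition (a). For a combinatorial constraint $r$, the only nonzero terms on the LHS are the $v^r_i z_i$, so $\sum_i v^r_i \delta^i = \sum_i v^r_i z^*_i \le p_r$ is condition (b). The only mildly delicate point in the argument is keeping $k^i_r$ inside the allowed integer range at the boundary $T^i_r = -1$ when $1/\epsilon$ is an integer; this off-by-one issue is precisely what the $\max$ in the definition of $k^i_r$ absorbs, and one checks that the key inequality $\epsilon(k^i_r+1) \le T^i_r + \epsilon$ is preserved there as well.
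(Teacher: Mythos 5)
Your proposal is correct and follows essentially the same route as the paper: the explicit choice $k^i_r = \lceil T^i_r/\epsilon\rceil - 1$ is exactly the paper's ``smallest $k^i_r$'' satisfying the block inequality, with $\delta^i = z^*_i$, and conditions (a) and (b) then follow from feasibility of $(x^*,z^*)$ and Assumption \ref{as:scaling} just as in the paper. Your additional care about the lower endpoint of the $k_r$ range (the $\max$ with $-\lceil\epsilon^{-1}\rceil$) is a harmless refinement of a detail the paper leaves implicit.
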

\noindent {\em Proof.} We choose a sketch for each block as follows.  For every $B_i$ with  $z^*_i = 1$ we set $\delta^i = 1$, and for $r \in \cM_i$ we choose $k^i_r$ {\bf smallest}, such that 
$$ x^*[i]^T Q^r_i x^*[i]  \ + \  \sum_{j \in B_i} d^r_j x^{*2}_j \ + \ \sum_{j \in B_i} c^r_j x^*_j \ + \  v^r_i \ \le \ \epsilon (k^i_r + 1).$$
Finally, if $z^*_i = 0$ we choose $\delta^i = 0$ and $k^i_r = 0$ for all $r$.  With these choices, feasibility of $(x^*,z^*)$ and Assumption \ref{as:scaling} imply (a) and (b) of Def. \eqref{def:sketchensemble}.
\qed
\begin{CO} A minimum-value sketch ensemble yields a solution for an instance of QIB that  (a) is $\max_r |\bB^r| \epsilon$-feasible over the mixed-integer constraints, (b) satisfies all combinatorial constraints, and (c) has value no larger than the optimal objective for QIB. \end{CO}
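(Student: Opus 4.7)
The plan is to observe that the corollary follows almost immediately by combining the two lemmas just proved (Lemma \ref{lem:ensembletosol} and Lemma \ref{lem:opttoensemble}), so the ``proof'' is really just a bookkeeping argument. I expect no serious obstacle; the content has already been packaged into the two lemmas.

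First I would fix an arbitrary optimal solution $(x^*,z^*)$ of problem QIB, letting $v^*$ denote its objective value. By Lemma \ref{lem:opttoensemble}, there exists at least one feasible sketch ensemble, and moreover one whose value is at most $v^*$. Consequently the minimum over all feasible sketch ensembles is well defined and is bounded above by $v^*$. Let $\mathcal{E}$ denote a minimum-value feasible sketch ensemble and let $\hat v$ denote its value; then $\hat v \le v^*$.

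Next, I would invoke Lemma \ref{lem:ensembletosol} applied to $\mathcal{E}$. That lemma states exactly that the certificate $(\hat x,\hat z)$ of $\mathcal{E}$ (a) satisfies every combinatorial constraint, (b) has maximum infeasibility over the mixed-integer constraints bounded by $\max_r |\bB^r|\epsilon$, and (c) has objective value equal to $\hat v$. Combining this last equality with $\hat v \le v^*$ gives claim (c) of the corollary, while (a) and (b) are restatements of the two feasibility assertions of Lemma \ref{lem:ensembletosol}.

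The only subtle point worth spelling out is that Lemma \ref{lem:opttoensemble} uses the full feasibility of $(x^*,z^*)$ together with the scaling Assumption \ref{as:scaling} to guarantee that the ``smallest $k^i_r$'' choice in its proof actually lies in the allowed range $\{-\lceil\epsilon^{-1}\rceil,\dots,\lceil\epsilon^{-1}\rceil-1\}$; once this is granted, the existence of a feasible ensemble (and hence of a minimum-value one) is assured, and the corollary follows. If I wanted to be completely explicit I would note that $(x^*,z^*)$ being feasible may be replaced by ``any feasible $(x^*,z^*)$,'' so in particular if QIB is infeasible the statement is vacuous; otherwise an optimal solution exists (under the standard compactness assumption implicit in the box bounds and the quadratic convex objective) and plays the role of $(x^*,z^*)$.
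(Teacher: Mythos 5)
Your argument is correct and is precisely the one the paper intends: the corollary is stated without proof because it follows immediately by combining Lemma \ref{lem:opttoensemble} (existence of a feasible ensemble with value at most the optimum) with Lemma \ref{lem:ensembletosol} (the certificate of any feasible ensemble, in particular a minimum-value one, has the stated feasibility and value properties). Your additional remarks about the range of the $k^i_r$ and the vacuous infeasible case are fine but not needed beyond what the two lemmas already provide.
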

Our dynamic-programming algorithm, given below, solves the minimum-value sketch ensemble problem in polynomial time for fixed $\epsilon$ and $\omega$.

\subsection{Dynamic programming algorithm for minimum-value sketch ensemble problem}\label{sec:dynprogalgorithm}

\noindent In what follows we denote by $G$ the constraint-block intersection graph of an instance of problem \eqref{QuadBlockInd}.  Without loss of generality $G$ is connected.  We are given a tree-decomposition $(T, X)$ of $G$ of width $\omega$.  For ease of notation, we refer to the vertices of $T$ as \textit{nodes}.   We view $T$ as rooted at an arbitrary non-leaf node.  

\noindent {\bf Comment.} Each block $B_i$  corresponds to a clique $K_i$ of $G$ and thus (as is well-known) the set of nodes $t$ such that $B_i \subseteq X_t$ form a subtree of $T$. 
\begin{LE}\label{lem:treestruct}  The following hold without loss of generality. (a) There is a one-to-one correspondence between leaves of $T$ and blocks. Namely, for each block $B_i$ there is a leaf $t(i)$ of $T$ such that $X_t = K_i$ and conversely, for each leaf $t$ we have that $X_t = K_i$ for some block $i$. (b) For each vertex $r$ of $G$, each leaf of the subtree $\{ t \in T \, : \, r \in X_t \}$ is a leaf
of $T$. (c) Every node of $T$ has degree $ \le 2$. \end{LE}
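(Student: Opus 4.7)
\medskip

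\noindent\textbf{Proof plan.} For (a), I start from any tree-decomposition $(T_0, \{X^0_t\})$ of $G$ of width $\omega$ and perform two modifications. First, for each block $B_i$, the Helly property for cliques in a tree-decomposition furnishes a node $t_i^0 \in V(T_0)$ with $K_i \subseteq X^0_{t_i^0}$; I attach a fresh pendant leaf $\ell_i$ to $t_i^0$ and set $X_{\ell_i} := K_i$. The enlarged decomposition remains valid and of the same width. Second, I iteratively delete every leaf of the current tree that is not one of the new $\ell_i$'s; call such a leaf \emph{useless}. The key claim is that whenever $p$ is a useless leaf with sole neighbor $q$, the inclusion $X_p \subseteq X_q$ holds, so the deletion is safe. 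This uses the WLOG assumption that every vertex of $G$ lies in some $K_j$ (any trivial constraint, involving no block, can be discarded beforehand): for $v \in X_p$ pick $j$ with $v \in K_j$, so $\ell_j$---which is never deleted---also contains $v$, and the path in the current tree from $p$ to $\ell_j$ lies inside the connected subtree of $v$, so its second node, namely $q$, must contain $v$ too. Termination of the pruning is automatic.

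\medskip

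\noindent For (b), I run a second cleanup on top of (a). For each internal node $t$ of the current tree and each $r \in X_t$, if $t$ has at most one neighbor in $T_r := \{t' : r \in X_{t'}\}$, strike $r$ from $X_t$. This preserves validity: $r$'s subtree shrinks by the leaf $t$ of $T_r$ and stays connected, while every edge of $G$ incident with $r$ is already covered by some $X_{\ell_j} = K_j$ (adjacency in $G$ forces joint incidence with a common block, so edge coverage is provided by the leaf bags). Since $\sum_t |X_t|$ strictly decreases at each step, the pass terminates, and in the result every leaf of every $T_r$ is a leaf of the tree. Property (a) survives because only internal bags are modified.

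\medskip

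\noindent For (c), I apply the standard binarization: root the tree at any non-leaf node and, whenever a node has more than two children, replace it by a chain of degree-two internal copies all carrying the same bag. Width does not change, leaf bags are untouched, and each subtree $T_r$ merely stretches through these identical copies, so (a) and (b) persist.

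\medskip

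\noindent The main obstacle is the iterative pruning step in (a): one must verify that the inclusion $X_p \subseteq X_q$ continues to hold throughout later iterations, including the delicate case in which the sole surviving neighbor $q$ of the useless leaf $p$ is itself one of the new $\ell_j$'s. The argument above handles this uniformly via the Helly-style observation on the connected subtree of $v$, and everything else hinges on this step.
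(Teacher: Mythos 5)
Your proof is correct and follows essentially the same route as the paper: attach a designated leaf with bag $K_i$ for each block and prune all other leaves, strip $r$ from internal bags that are leaves of the subtree $\{ t : r \in X_t\}$, and binarize by splitting high-degree nodes into chains of copies of the same bag; your write-up is in fact more explicit than the paper's (the WLOG that every constraint meets some block, and the verification $X_p \subseteq X_q$ via the connected-subtree property). The only cosmetic point is that the chain copies in the binarization have two children (hence degree three, not two), which matches what the paper's own proof of (c) produces and what the dynamic program's Case 2 requires.
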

\begin{proof}
(a) Take any node $t \in T$ such that $K_i \subseteq X_t$.  Then expand $T$ (and the tree-decomposition) by adding an edge $\{ q, t \}$ where $q$ is a new vertex, and set $X_q = K_i$. Let us call this the leaf \textit{designated} for block $B_i$ This proves the first part of (a).  For the converse, if there is a leaf $t$ which is not a designated leaf, then removing $t$ from $T$ yields a (smaller) tree decomposition by the first part of (a). (b) Suppose the claim is not true for some $r$ and let $u$ be a leaf of $\{ t \in T \, : \, r \in X_t \}$ which is not a leaf of $t$.  Then 
resetting $X_u \leftarrow X_u - \{r\}$ yields a smaller tree-decomposition of $G$ by (a). (c) If $t$ is a node of degree larger than $2$, then split $t$ into a subtree each of whose non-leaf nodes has degree $3$; each such node
$v$ is assigned $X_v = X_t$. \qed 
\end{proof}
 \noindent {\bf Comment.}  It is possible that  two distinct blocks $B_i, \, B_j$ give rise to the same clique in $G$, i.e., $K_i = K_j$.  However, since the blocks are distinct the corresponding set of variables are different.  The corollary highlights that the leaves of $T$ 
 partition the set of variables (i.e., into the blocks).
 
\vspace{.1in}
  \noindent Next we provide a tree-driven generalization for the computation of a minimum-value sketch ensemble that is suitable for dynamic programming.  We will rely on the following notation, given a node $t \in T$: 
  \begin{itemize}
      \item \boldmath $T_t$ \unboldmath is the subtree of $T$ containing all descendants of $t$. 
      \item We say that a {\bf block} $B_i$ is {\bf contained} in $T_t$ if $\{ q \in T \, : \, K_i \subseteq X_q \} \subseteq T_t$. Recall that
      $K_i$ is the clique induced by $B_i$ in $G$.
      \item We write  \boldmath $\beta_t $\unboldmath $\doteq$ set of blocks contained in $T_t$. 
      \item We say that a {\bf constraint} $r$ is {\bf contained} in $T_t$ if $\{ q \in T \, : \, r \in X_q \} \subseteq T_t$.  
      \item For every node $t$, \boldmath $\bM_t$ \unboldmath and \boldmath $\bC_t$ \unboldmath are the sets of mixed-integer 
and combinatorial constraints $r \in X_t$ (resp.).
  \end{itemize} 
We will develop, below, a dynamic-programming algorithm for the minimum-value feasible sketch ensemble problem.  The algorithm will proceed 'up' the tree $T$; this will require an appropriate generalization of the feasible sketch ensemble problem given in Definition \ref{def:partialensemble}. First we prove the following strengthening of Lemma \ref{lem:opttoensemble}. 
  \begin{LE}\label{lem:opttopartialensembl2} Let $(x^*, z^*)$ be a feasible solution to problem \eqref{QuadBlockInd}. Then there is a feasible sketch ensemble $\{ ( \delta^i, (k^i_r \, : \, r \in \cM_i)) \, : \, i \in B \}$  such that at every node $t$
\begin{subequations} \label{eq:upthetree}
    \begin{align}
        & \sum_{i \in \beta_t} k^i_r \le \lceil \epsilon^{-1} \rceil -1 \quad \forall r \in \beta_t \label{eq:upM}
    \end{align}
\end{subequations}
  \end{LE}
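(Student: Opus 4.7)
I would reuse the \emph{same} sketch ensemble constructed in the proof of Lemma~\ref{lem:opttoensemble} and show that its partial sums along the tree decomposition already satisfy the stated bound, essentially as a direct consequence of the scaling hypothesis. For a mixed-integer constraint $r$, write
\[
\alpha^{*,r}_i \ \doteq \ x^*[i]^\top Q^r_i x^*[i] \, + \, \sum_{j \in B_i} d^r_j (x^*_j)^2 \, + \, \sum_{j \in B_i} c^r_j x^*_j \, + \, v^r_i z^*_i
\]
for the contribution of block $i$ at $(x^*,z^*)$. By the construction in Lemma~\ref{lem:opttoensemble}, $k^i_r$ is the smallest integer with $\alpha^{*,r}_i \le \epsilon(k^i_r + 1)$, so $k^i_r + 1 = \lceil \alpha^{*,r}_i/\epsilon\rceil$ and in particular $k^i_r \le \alpha^{*,r}_i/\epsilon$.

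The key per-bag estimate is then a one-liner. Because $Q^r_i \succeq 0$, $d^r_j \ge 0$ and $x^*_j, z^*_i \in [0,1]$ by Assumption~\ref{as:scaling}(i), the bound
\[
\alpha^{*,r}_i \ \le \ \|Q^r_i\|_1 \, + \, \sum_{j \in B_i}\bigl(d^r_j + |c^r_j|\bigr) \, + \, |v^r_i| \ =: \ \sigma^r_i
\]
holds for every $i \in \bB^r$ (negative $c^r_j$ or $v^r_i$ only decrease $\alpha^{*,r}_i$). Summing over $i \in \beta_t$ (on which $k^i_r$ is really supported on $\beta_t \cap \bB^r$) and invoking Assumption~\ref{as:scaling}(ii),
\[
\sum_{i \in \beta_t} k^i_r \ \le \ \frac{1}{\epsilon}\sum_{i \in \beta_t \cap \bB^r} \sigma^r_i \ \le \ \frac{1}{\epsilon}\sum_{i \in \bB^r} \sigma^r_i \ = \ \frac{1}{\epsilon}.
\]
Since the left-hand side is an integer, it is bounded above by $\lfloor 1/\epsilon\rfloor \le \lceil \epsilon^{-1}\rceil - 1$, which is exactly \eqref{eq:upM}.

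The proof is essentially that one estimate; the real obstacles are bookkeeping rather than depth. First, one must read ``$\forall r \in \beta_t$'' in the statement as ranging over mixed-integer constraints (the bound is vacuous when $r$ indexes blocks). Second, care is needed with signs, but one only needs the upper bound $\alpha^{*,r}_i \le \sigma^r_i$, which uses nonnegativity of $x^*, z^*, Q^r_i, d^r_j$ and the absolute values hidden in $\sigma^r_i$. Third, the borderline case $1/\epsilon \in \bZ$ is a cosmetic off-by-one; it is dispatched by noting that $\sigma^r_i < 1$ strictly whenever $\beta_t \cap \bB^r \subsetneq \bB^r$, and that when $\beta_t \cap \bB^r = \bB^r$ one can invoke feasibility $\sum_{i \in \bB^r}\alpha^{*,r}_i \le p_r$ from \eqref{QuadBlockInd} in place of scaling. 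Crucially, the tree-decomposition plays \emph{no} role in the inequality itself---the estimate holds for an arbitrary subset of blocks---and enters only later, when this structural property is used to keep the dynamic-programming state $(k^i_r)_{r \in X_t}$ of bounded size at each bag.
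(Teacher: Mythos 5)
Your proposal is correct and takes essentially the same route as the paper: it reuses the sketch construction from Lemma~\ref{lem:opttoensemble} and bounds $\sum_{i\in\beta_t} k^i_r$ via Assumption~\ref{as:scaling}, exactly as the paper does by noting that the partial sum of block contributions to constraint $r$ has absolute value at most $1$ (your per-block bound by $\sigma^r_i$ is just a restatement of this). The only (shared) delicacy is the off-by-one when $\epsilon^{-1}$ is an integer, which is settled most cleanly by the strict inequality $\epsilon k^i_r < \alpha^{*,r}_i$ coming from the minimality of $k^i_r$ rather than by your feasibility-based fallback (which would not suffice if $p_r \ge 1$), but this is cosmetic and the paper glosses over the same point.
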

\noindent {\em Proof.} We choose a sketch for each block $B_i$ as follows. 
 If $z^*_i = 0$ we use the zero-block. Otherwise if $z^*_i = 1$ we set $\delta^i = 1$, and for $r \in \cM_i$ we choose $k^i_r$  smallest such that 
 \begin{align} \label{eq:crucial}
& x^*[i]^T Q^r_i x^*[i]  \ + \  \sum_{j \in B_i} d^r_j x^{*2}_j \ + \ \sum_{j \in B_i} c^r_j x^*_j \ + \  v^r_i \ \le \ \epsilon (k^i_r + 1),
\end{align}
as in the proof of Lemma \ref{lem:opttoensemble}. Hence, for any node $t$,
\begin{align} \label{eq:crucial2}
&  \sum_{i \in \beta_t } \left(x^*[i]^T Q^r_i x^*[i]  \, + \,  \sum_{j \in B_i} d^r_j x^{*2}_j \ + \ \sum_{j \in B_i} c^r_j x^*_j \, + \,  v^r_i z^*_i\right) \, \le \, \sum_{i \in \beta_t} \epsilon (k^i_r + 1).
\end{align}
The left sum has absolute value at most $1$, by Assumption \ref{as:scaling}.  Hence \ref{eq:upM} holds.  
\qed

\begin{DE}\label{def:partialensemble} Let $t \in V(T)$.  The {\bf partial sketch ensemble problem} corresponding to the pair of sets  
$\Pi = ( \{ \hat k_r  \, \in \bZ \, : \, r \in \bM_t \}, \{ \hat \kappa_r \, \in \bZ \, : \, r \in \bC_t\})$ satisfying
\begin{subequations}\label{eq:partialenum}
\begin{align} 
\forall r \in \bM_t: \   &  -\lceil \epsilon ^{-1} \rceil \, \le \, k_r \, \le \, \lceil \epsilon^{-1} \rceil - 1   \label{eq:rangek} \\ 
\forall r \in \bC_t: \  &  -|\{ i \in \bB^r \cap \beta_t \, : \, v^r_i = -1 \}|  \le \hat \kappa_r \le  |\{ i \in \bB^r \cap \beta_t \, : \, v^r_i = +1 \}| \label{eq:rangekappa}
\end{align}
\end{subequations}
 is the optimization problem given next: 

 \noindent For each block $B_i \in \beta_t$ select a feasible sketch $( \delta^i, (k^i_r \, : \, r \in \cM_i))$, so as to minimize the sum of values of the selected sketches, and subject to constraints (a)-(d).
\begin{itemize} 
\item [(a)] For each mixed-integer constraint $r$ contained in  $T_t$, $\sum_{i \in \beta_t  } \epsilon (k^i_r + 1) \, \le \, b^r + \epsilon | \bB^r |$. (Equivalently, $\sum_{i \in \beta_t  } \epsilon k^i_r \, \le \, b^r$)
\item [(b)] For each combinatorial constraint $r$ contained in $T_t$,     $\sum_{i  \in \beta_t } v^r_i  \delta^i  \ \le \ b_r$,
\item [(c)]  For each mixed-integer constraint $r \in X_t$,  $\sum_{ i \in \beta_t} k^i_r \ = \ \hat k_r$.
\item [(d)] For each combinatorial constraint $r \in X_t$,   $\sum_{i \in \beta_t }  \delta^i \ = \ \hat \kappa_r$. 
\end{itemize}
\end{DE} 
\noindent {\bf Comments.} This definition provides a tree-driven generalization for the minimum-value sketch ensemble problem given below. In this regard,
when node $t$ is the root then every constraint is contained in $T_t$ and conditions (a) and (b) become (a) and (b) of Definition \ref{def:sketchensemble}, i.e., the partial sketch ensemble problem becomes the sketch ensemble problem. Note that Lemma \ref{lem:opttopartialensembl2}
justifies the restriction \eqref{eq:rangek} (while \eqref{eq:rangekappa} is trivial).  This observation motivates our dynamic-programming algorithm.  The algorithm will move ``up'' the tree; each element of the  so-called ``state space'' at any node $t$ will the set of feasible partial sketch ensemble problems.  

Constraints (a) and (b) imply that the sketches we have selected yield an $\epsilon$-feasible (resp., feasible) solution for the mixed-integer and combinatorial constraints that are contained in $T_t$.  Constraints (c) and (d) instead 
say that the sketches match the ``partial'' solution given by the target values $\hat k_r$ and $\hat \kappa_r$ in \eqref{eq:rangek} and \eqref{eq:rangekappa}, respectively.  

{\bf Note} that, e.g., if a mixed-integer constraint  $r \in X_t$ is contained in $T_t$ then it must be the case that $\hat k_r \le b^r $ or else constraints (a) and (c) are incompatible.  Similarly with (b) and (d).  In what follows we assume we only enumerate pairs $\Pi$ that are {\bf consistent} in this sense.  
\begin{DE}\label{def:zeta} For every node $t$,   $\bR_t  \, = \, \bM_t \cup \bC_t$.  Given a pair $\Pi = ( \{ \hat k_r  \, : \, r \in \bM_t \}, \{ \hat \kappa_r \, : \, r \in \bC_t\})$ we denote by  \boldmath$\zeta_t(\Pi)$ the value of the optimization problem in Definition \ref{def:partialensemble}. \end{DE}

\noindent {\bf Initialization of algorithm: processing leaves.}  Let $t$ be a leaf of $T$; suppose it is designated for block $B_i$, i.e., $K_i = X_t$. Then the set of all (feasible) partial sketch ensemble problems at $t$ is included among the set of feasible sketches of block $B_i$. \\
\noindent Next, we proceed up the tree.  Let $t \in T$ be any node such that all its children have already been 
processed in the procedure. \\
\noindent {\bf Case 1. $t$ only has one child, $s$}.  We obtain the state space at $t$ using the following two-step procedure.  By Lemma \ref{lem:treestruct} $X_t \subseteq X_s$.  We begin by restricting each element of the state space at $s$; if
$( \{ \hat k_r  \, : \, r \in \bM_s \}, \{ \hat \kappa_r \, : \, r \in \bC_s\})$ is such an element then we simply keep the entries for those indices $r \in \bR_t$.  However such an element of the state space at $s$ may not be consistent: this is potentially the case when there is a constraint $r$ which is contained in $T_t$ but not contained in $T_s$ (i.e., the requirement (c) or (d) is not satisfied by $r$).  In that case
$( \{ \hat k_r  \, : \, r \in \cM_s \}, \{ \hat \kappa_r \, : \, r \in \bC_s\})$ should not be included in the state space at $t$.  Formally, we can state the following:

\begin{LE} Suppose that the pair $\Pi \ = \ ( \{ \hat k_r  \, : \, r \in \bM_t \}, \{ \hat \kappa_r \, : \, r \in \bC_t\})$ is consistent.  Then 
\begin{subequations} 
\begin{align} 
\zeta_t(\Pi) \ = \ & \min \zeta_s( \{ \bar k_r  \, : \, r \in \bM_s \}, \{ \bar \kappa_r \, : \, r \in \bC_s\} ) \nonumber \\
\text{subject to:} & \ \text{ $\{ \bar k_r  \, : \, r \in \bM_s \}, \{ \bar \kappa_r \, : \, r \in \bC_s\} $ is consistent  }, \nonumber\\
& \ \bar k_r = \hat k_r \ \forall \ r \in \bM_t; \quad \bar \kappa_r = \hat \kappa_r \ \forall \ r \in \bC_t. \nonumber
  \end{align}
\end{subequations}
\end{LE}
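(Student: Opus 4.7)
The plan is to prove the equality by two inequalities, using the following structural facts about the one-child case. Since $t$ has $s$ as its unique child and $X_t \subseteq X_s$ by Lemma~\ref{lem:treestruct}, we have $\bM_t \subseteq \bM_s$ and $\bC_t \subseteq \bC_s$. Every block $B_i$ whose clique $K_i$ meets $t$ also meets $s$, so the set of blocks contained in $T_t$ equals the set contained in $T_s$; in particular $\beta_t = \beta_s$. Every constraint contained in $T_s$ is also contained in $T_t$; conversely, any constraint contained in $T_t$ but not in $T_s$ must belong to $X_t$, since its only potential occurrence outside $T_s$ can lie at $t$ itself.

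For the direction $\zeta_t(\Pi) \le \min_{\bar\Pi} \zeta_s(\bar\Pi)$, I would fix a consistent $\bar\Pi$ extending $\Pi$ and take an optimal partial ensemble $\{(\delta^i, (k^i_r))\}_{i \in \beta_s}$ realizing $\zeta_s(\bar\Pi)$. Using $\beta_t = \beta_s$, I would show the very same sketches are feasible for the partial problem at $t$ with target $\Pi$, with the same objective value. Conditions (c), (d) at $t$ follow from (c), (d) at $s$ combined with $\bar\Pi|_t = \Pi$. Condition (a) at $t$ for any $r$ contained in $T_s$ is inherited from (a) at $s$; for $r \in X_t$ that is contained in $T_t$ but not in $T_s$, I would instead use (c) at $s$ together with consistency of $\Pi$ to get $\sum_{i \in \beta_t} \epsilon k^i_r = \epsilon \hat k_r \le b^r$. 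Condition (b) is handled identically.

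For the reverse direction, given an optimal ensemble $\{(\delta^i, (k^i_r))\}_{i \in \beta_t}$ for $\zeta_t(\Pi)$, I would define
\[
\bar k_r \ := \ \sum_{i \in \beta_s} k^i_r \quad (r \in \bM_s), \qquad \bar \kappa_r \ := \ \sum_{i \in \bB^r \cap \beta_s} v^r_i \delta^i \quad (r \in \bC_s),
\]
and verify three things in turn. First, $\bar\Pi$ extends $\Pi$: this is immediate from (c), (d) at $t$ since $\beta_s = \beta_t$. Second, $\bar\Pi$ is consistent: for any $r \in \bM_s$ contained in $T_s$, the inclusion $T_s \subseteq T_t$ forces $r$ to be contained in $T_t$, so (a) at $t$ yields $\epsilon \bar k_r \le b^r$; the analogous argument handles $r \in \bC_s$. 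Third, the same sketches form a feasible ensemble for $\zeta_s(\bar\Pi)$ with identical value: conditions (a), (b) at $s$ reduce to the corresponding conditions at $t$ via $\beta_s = \beta_t$ and $T_s \subseteq T_t$, while (c), (d) at $s$ hold by construction. This gives $\zeta_s(\bar\Pi) \le \zeta_t(\Pi)$.

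The main subtlety, and essentially the only nontrivial step, is the bookkeeping for constraints that sit in $X_t$ but whose entire occurrence pattern is confined to $T_t$: these participate in the feasibility requirements at $t$ yet are absent from those at $s$, and matching the two problems at such constraints is what makes the consistency hypothesis on $\Pi$ indispensable in the upper-bound direction.
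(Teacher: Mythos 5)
Your overall route---two inequalities verified directly from Definition~\ref{def:partialensemble}---is exactly what the paper intends (its proof is the one-line ``follows from the definition''), and most of your bookkeeping is sound: the observation that a constraint contained in $T_t$ but not in $T_s$ must lie in $X_t$, the use of consistency of $\Pi$ for precisely those constraints, and the transfer of conditions (a)--(d) between $t$ and $s$ all check out. The genuine gap is your justification of $\beta_t=\beta_s$, on which the whole argument leans. From ``$K_i\subseteq X_t$ implies $K_i\subseteq X_s$'' you cannot conclude that the blocks contained in $T_t$ and in $T_s$ coincide: the dangerous case is a block $B_i$ whose occurrence set $S_i=\{q\in T : K_i\subseteq X_q\}$ has its topmost node exactly at $t$; such a block is contained in $T_t$ but not in $T_s$ (because $t\notin T_s$), and your observation only shows $s\in S_i$, which does not exclude this. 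The equality is nevertheless true, but for a different reason, and it needs the normalization of Lemma~\ref{lem:treestruct}: if $B_i\in\beta_t\setminus\beta_s$, then $t\in S_i$, i.e.\ $K_i\subseteq X_t$, while $S_i\subseteq T_t$ forces some $r^*\in K_i$ with $r^*\notin X_p$, where $p$ is the parent of $t$ (such a parent exists: in Case~1 the node $t$ has a single child, so if it were the root it would be a leaf of $T$, contradicting the choice of a non-leaf root). Since the only neighbors of $t$ are $p$ and $s$, the node $t$ is then a leaf of the subtree $\{q : r^*\in X_q\}$ although $t$ is not a leaf of $T$, contradicting part (b) of Lemma~\ref{lem:treestruct}. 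Without invoking that lemma, $\beta_t=\beta_s$ can genuinely fail, and then the recursion would omit the sketch values of the blocks entering at $t$; so this step, not only the constraint bookkeeping you single out, is load-bearing.

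A second, smaller omission sits in your reverse direction: the pair $\bar\Pi$ you construct must satisfy the ranges \eqref{eq:partialenum} in order to be an admissible state at $s$. For $r\in\bM_t$ this is inherited from $\Pi$, and your $\bar\kappa_r$ automatically lies in \eqref{eq:rangekappa}, but for $r\in\bM_s\setminus\bM_t$ nothing you cite gives the lower bound in \eqref{eq:rangek}: condition (a) at $t$ only bounds $\sum_{i\in\beta_s} k^i_r$ from above. One has to add a normalization---e.g.\ argue that in an optimal ensemble each $k^i_r$ may be increased without losing feasibility of the sketch, without increasing its value, and without disturbing (a)--(d), so that the induced $\bar k_r$ can be assumed to lie in the prescribed range---before the constructed $\bar\Pi$ is eligible for the minimum on the right-hand side.
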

\noindent {\em Proof.} Follows from Definition \ref{def:partialensemble}. \qed
\noindent {\bf Case 2. $t$ has two children, $p$ and $q$.} In this case $X_t \subseteq X_p \cup X_q$.  Formally we now obtain, with a similar proof:

\begin{LE} \label{lem:case2} For any consistent pair $\Pi \ = \ ( \{ \hat k_r  : \, r \in \bM_t \}, \{ \hat \kappa_r \, : \, r \in \bC_t\})$,  
\begin{subequations} 
\begin{align} 
\zeta_t(\Pi) = \ & \min \zeta_p( \{ \bar k_r \, : \, r \in \bM_p \}, \{ \bar \kappa_r \, : \, r \in \bC_p \} )   +  \zeta_q( \{ \tilde k_r \, : \, r \in \bM_q \}, \{\tilde \kappa_r\, : \, r \in \bC_q\} )\nonumber\\
\text{subject to:} & \ \text{ $\{ \bar k_r  \, : \, r \in \bM_p \}, \{ \bar \kappa_r \, : \, r \in \bC_p\} $ is consistent  } \nonumber\\  
& \ \ \hat k_r = \bar k_r + \tilde k_r, \ \forall r \in M_t \, \text{s.t.} \, r \in M_p \cap M_q; \nonumber \\
& \ \ \hat \kappa_r = \bar \kappa_r + \tilde \kappa_r,  \ \forall r \in C_t \, \text{s.t.} \, r \in C_p \cap C_q \nonumber \\
& \ \ \hat k_r = \bar k_r  \ \forall r \in M_t  - M_q;  \ \hat k_r = \tilde k_r  \ \forall r \in M_t  - M_p \nonumber \\
& \ \ \hat \kappa_r = \bar \kappa_r  \ \forall r \in C_t  - C_q;  \ \hat \kappa_r = \tilde k_r  \ \forall r \in C_t  - C_p.
\end{align}
\end{subequations} 
\end{LE}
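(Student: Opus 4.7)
The plan is to establish the recurrence by exhibiting a value-preserving correspondence between feasible partial sketch ensembles at $t$ with targets $\Pi$ and pairs of feasible partial sketch ensembles at $p$ and at $q$ whose targets satisfy the matching identities stated in the lemma. Two preliminary structural facts from the tree-decomposition drive everything. First, by Lemma \ref{lem:treestruct}(a), each block is designated to a leaf, so internal nodes contribute no block; hence $\beta_t = \beta_p \cup \beta_q$ (disjoint union). Second, the subtree property gives: (i) if a constraint $r$ is contained in $T_t$ then every block incident with $r$ lies in $\beta_t$, since otherwise the $r$-subtree would have to pass through the parent of $t$; and (ii) if $r \in X_t$ but $r \notin X_q$, then no block in $\beta_q$ is incident with $r$, because any such incidence would force $r$ to lie in $X_q$ by connectedness of the $r$-subtree along the path from a $K_i$-bag to $t$ (and symmetrically with $p$ and $q$ swapped).

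In the forward direction, given a feasible partial ensemble for $\Pi$ at $t$, I restrict the sketches to the blocks of $\beta_p$ and set $\bar k_r := \sum_{i \in \beta_p \cap \bB^r} k^i_r$ for $r \in \bM_p$, and similarly $\bar \kappa_r$, $\tilde k_r$, $\tilde \kappa_r$. I then verify that these partial assignments satisfy (a)--(d) of Definition \ref{def:partialensemble} at $p$ and at $q$: any constraint $r$ contained in $T_p$ is also contained in $T_t$, and fact (i) above yields $\beta_t \cap \bB^r = \beta_p \cap \bB^r$, so condition (a)/(b) at $t$ transfers directly to $p$, while conditions (c)/(d) at $p$ hold by the definition of $\bar k_r, \bar \kappa_r$. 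The matching identities then follow case by case: when $r \in \bM_t - \bM_q$, fact (ii) forces $\sum_{i \in \beta_q \cap \bB^r} k^i_r = 0$, giving $\hat k_r = \bar k_r$; when $r \in \bM_t \cap \bM_p \cap \bM_q$, the disjoint splitting $\beta_t = \beta_p \cup \beta_q$ gives $\hat k_r = \bar k_r + \tilde k_r$; and analogously for combinatorial constraints.

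In the reverse direction, given compatible child ensembles, I glue them into a single assignment of sketches on $\beta_t$. Conditions (c) and (d) at $t$ follow by direct computation from the matching relations together with fact (ii). For condition (a) at $t$ applied to a mixed-integer $r$ contained in $T_t$, I split two cases: if $r \notin X_t$, then the $r$-subtree lies entirely in $T_p$ or entirely in $T_q$, so (a) transfers from the corresponding child; if $r \in X_t$, then consistency of $\Pi$ (noted in the paragraph following Definition \ref{def:partialensemble}) already forces $\epsilon \hat k_r \le b^r$, and the matching identity yields $\sum_{i \in \beta_t} \epsilon k^i_r = \epsilon \hat k_r$ as required. Condition (b) is handled identically for combinatorial constraints. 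Since the objective is additive over blocks and $\beta_t$ is the disjoint union of $\beta_p$ and $\beta_q$, the values add, and taking the minimum on both sides of the correspondence gives the recurrence.

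The main obstacle is the bookkeeping for how a single constraint $r$ can belong or not belong to each of $X_t, X_p, X_q$ and also be contained in $T_t, T_p,$ or $T_q$; the case analysis branches into several subcases. In every subcase, however, the subtree property of the tree-decomposition collapses extraneous contributions from the ``absent'' child to zero, via fact (ii), or places the $r$-subtree entirely inside one child's subtree; so no techniques are needed beyond those already used in the proofs of Lemma \ref{lem:opttopartialensembl2} and Case 1.
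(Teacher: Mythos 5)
Your proof is correct and takes the route the paper intends: the paper's own proof of Lemma~\ref{lem:case2} is the single line ``follows from Definition~\ref{def:partialensemble},'' and your split/glue correspondence (additivity of the objective over the disjoint block sets of the two child subtrees, the two subtree-connectivity facts, and the use of consistency of $\Pi$ for constraints in $X_t$ that are contained in $T_t$) is exactly the verification that line leaves implicit. The only step you assert rather than check --- that the induced child targets $\bar k_r, \tilde k_r$ in your forward direction lie in the range \eqref{eq:rangek} required for the child pairs to be admissible --- is a point the paper itself leaves to the remark that Lemma~\ref{lem:opttopartialensembl2} ``justifies the restriction \eqref{eq:rangek},'' so you introduce no gap beyond what is already implicit in the paper's statement.
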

\noindent {\em Proof.} Again, follows from Definition \ref{def:partialensemble}. \qed

\noindent Now we turn to complexity analysis.  First, we have:
\begin{LE} Let $\omega$ be the treewidth of the constraint-block incidence graph.  Then the number of sketches for a given block $B_i$ is at most $O(2/ \epsilon)^{\omega + 1}$. \end{LE}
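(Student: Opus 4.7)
The plan is to combine a direct enumeration of the sketch parameters with the standard clique-containment property of tree decompositions. By the definition of a sketch for $B_i$, the data consist of the bit $\delta \in \{0,1\}$ together with one integer $k_r$ per constraint $r \in \cM_i$, each chosen from the range $[-\lceil \epsilon^{-1} \rceil, \lceil \epsilon^{-1} \rceil - 1]$, a set of cardinality exactly $2\lceil \epsilon^{-1} \rceil$. Hence the total number of sketches is at most $2 \cdot (2\lceil \epsilon^{-1} \rceil)^{|\cM_i|}$, and the entire task reduces to bounding $|\cM_i|$ in terms of $\omega$.

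For this, note that by the definition of the constraint-block intersection graph $G$, any two constraints both incident with $B_i$ are joined by an edge of $G$; consequently, the set of constraints incident with $B_i$ (and in particular its subset $\cM_i$) induces a clique of $G$. Invoking the well-known fact that every clique of a graph is contained in some single bag of any tree decomposition --- an immediate consequence of the Helly property of subtrees in a tree, applied to the subtrees $\{t \in T : r \in X_t\}$ as $r$ ranges over the clique --- one obtains $|\cM_i| \le \max_{t \in V(T)} |X_t| = \omega + 1$. Substituting into the previous count yields a sketch-count bound of $2 \cdot (2\lceil \epsilon^{-1} \rceil)^{\omega+1} = O((2/\epsilon)^{\omega+1})$, which is exactly the claimed estimate.

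There is no real obstacle in the argument: the clique-in-bag property is the only non-trivial ingredient and it is entirely standard, so the proof is essentially a one-line counting argument once the clique structure induced by a block in $G$ is made explicit.
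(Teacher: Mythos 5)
Your proof is correct and follows exactly the argument the paper intends (the lemma is stated without an explicit proof, but the paper's remark that each sketch problem has $\le \omega+1$ constraints and the counting in the subsequent complexity lemma rest on the same reasoning): enumerate $\delta$ and the $2\lceil \epsilon^{-1}\rceil$ choices for each $k_r$, then bound $|\cM_i| \le \omega+1$ because the constraints incident with $B_i$ form a clique of $G$ and every clique lies in a single bag of a tree-decomposition. No gaps; the clique-in-bag step is the standard fact and is applied correctly.
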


\noindent Let $\kappa$ be the largest support of any combinatorial constraint and $m_c =$ number of combinatorial constraints.  
\begin{LE}  Let $N$ be the number of nodes in the tree $T$.  The DP algorithm correctly solves the minimum-weight sketch ensemble problem in time $$O(N   (2/\epsilon)^{2\omega + 2} \kappa^{2 \min\{ m_c, \omega + 1 \}}     \,);$$ plus the time needed to solve  $O( |B| (2/\epsilon)^{\omega} )$ sketches. \end{LE}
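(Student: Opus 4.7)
The proof has two parts: correctness of the dynamic program and a counting argument for the runtime. For correctness, I would proceed by induction on $T$ from the leaves upward, establishing that $\zeta_t(\Pi)$ from Definition \ref{def:zeta} is correctly computed for every consistent $\Pi$. The base case is the leaf initialization: for a leaf $t$ designated for block $B_i$, enumerate all feasible sketches for $B_i$, read off the ``signature'' of each one (its $k^i_r$ values for $r \in \bM_t$ and its contribution $v^r_i\delta^i$ to $\hat\kappa_r$ for $r \in \bC_t$), and retain the minimum-value sketch per signature; by Definition \ref{def:partialensemble} applied to $\beta_t = \{B_i\}$ this is exactly $\zeta_t(\Pi)$. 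The inductive step is the Case 1 and Case 2 recursions already proved above. The restriction in \eqref{eq:rangek} to $|k_r| \le \lceil \epsilon^{-1}\rceil$ does not exclude any optimal ensemble, by Lemma \ref{lem:opttopartialensembl2}; at the root, every mixed-integer and combinatorial constraint is contained in $T_{\text{root}}$, so conditions (a)-(d) of Definition \ref{def:partialensemble} coincide with (a)-(b) of Definition \ref{def:sketchensemble} (conditions (c)-(d) become vacuous after minimizing over the $\hat k_r, \hat \kappa_r$).

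For the complexity, first bound the state space at a node $t$. Since $|X_t| \le \omega+1$, we have $|\bM_t| + |\bC_t| \le \omega+1$ with $|\bC_t| \le \min\{m_c, \omega+1\}$. Each $\hat k_r$ with $r \in \bM_t$ takes at most $2\lceil \epsilon^{-1}\rceil = O(2/\epsilon)$ values by \eqref{eq:rangek}, and each $\hat \kappa_r$ with $r \in \bC_t$ takes at most $|\bB^r|+1 \le \kappa+1$ values by \eqref{eq:rangekappa}, so the number of consistent $\Pi$ is $O((2/\epsilon)^{\omega+1}\kappa^{\min\{m_c,\omega+1\}})$. Next, bound the cost of producing $\zeta_t(\Pi)$ from the children. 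The Case 1 recursion is a projection (at most one child-state per output), costing a factor of $O((2/\epsilon)^{\omega+1}\kappa^{\min\{m_c,\omega+1\}})$ per node. The Case 2 recursion enumerates pairs (one state at $p$, one at $q$), costing the square: $O((2/\epsilon)^{2(\omega+1)}\kappa^{2\min\{m_c,\omega+1\}})$ per node. Multiplying by the $N$ nodes in $T$ gives the first term of the claimed runtime.

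Finally, the only nontrivial work at leaves is solving the convex QCQPs defining the sketch values. Each block $B_i$ corresponds to one leaf, the number of mixed-integer constraints incident to $B_i$ is at most $\omega$ (since $\cM_i$ together with $B_i$ forms a clique of size $\le \omega+1$ in $G$), and $\delta \in \{0,1\}$; so the number of sketches per block is $O((2/\epsilon)^{\omega})$ and the total, over all $|B|$ blocks, is $O(|B|(2/\epsilon)^{\omega})$ as stated. The main obstacle I anticipate is the bookkeeping in Case 2: the argument requires that the enumeration over pairs at $p,q$ that collapses to a given consistent $\Pi$ at $t$ is faithfully parameterized by the few ``shared'' indices in $X_p \cap X_q$, so that the squared factor is $(2/\epsilon)^{2\omega+2}\kappa^{2\min\{m_c,\omega+1\}}$ rather than something worse; this follows because, for constraints $r \in X_t$ that lie in $X_p\cap X_q$, the recursion splits $\hat k_r = \bar k_r + \tilde k_r$ with each summand confined to the node's enumeration range, while constraints in $X_t$ that occur in exactly one of $X_p, X_q$ are transmitted unsplit per Lemma \ref{lem:case2}.
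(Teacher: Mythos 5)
Your proposal follows the paper's own proof: correctness by induction up the tree via the leaf initialization and the Case 1/Case 2 recursion lemmas (with Lemma \ref{lem:opttopartialensembl2} justifying the range restriction \eqref{eq:rangek}), the per-node state count $(2/\epsilon)^{\omega+1}\kappa^{\min\{m_c,\omega+1\}}$, the squared factor from enumerating pairs of child states in Case 2, a factor of $N$ over the nodes, and the leaf work consisting of solving sketches; the paper states exactly this, only more tersely.

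The one step where your reasoning is not valid is the leaf-sketch count. You argue $|\cM_i|\le\omega$ because ``$\cM_i$ together with $B_i$ forms a clique of size $\le\omega+1$ in $G$.'' But $B_i$ is not a vertex of $G$: the vertices of the constraint-block intersection graph are the \emph{constraints}, and the clique $K_i$ induced by block $B_i$ consists of all constraints (mixed-integer and combinatorial) incident with $B_i$. The correct consequence of treewidth $\omega$ is therefore $|\cM_i|\le |K_i|\le\omega+1$, which yields $O(|B|(2/\epsilon)^{\omega+1})$ sketches in total --- consistent with the paper's preceding lemma bounding the number of sketches per block by $O(2/\epsilon)^{\omega+1}$, and one exponent higher than the $O(|B|(2/\epsilon)^{\omega})$ appearing in the statement (a looseness present in the paper itself, whose proof simply asserts the count without derivation). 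Your clique argument happens to reproduce the stated exponent, but it is not a correct derivation; aside from this point, the proposal matches the paper's argument.
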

\noindent {\em Proof.} Correctness follows from the above analysis.  Next, note that for any node $t$,   $|X_t| \le \omega + 1$.  Hence the number of pairs $\Pi$ that satisfy \eqref{eq:partialenum} in Definition \ref{def:partialensemble}  is at most 
$(2/\epsilon)^{|\bM_t|}(\max_{r \in \bC_t}|\bB^r|)^{|\bC_t|} \, \le \,(2/\epsilon)^{|\bM_t|}\kappa^{|\bC_t|} \le (2/\epsilon)^{\omega + 1} \kappa^{\min\{ m_c, \omega + 1 \}}$.     Case 1 can be handled by enumerating all pairs at node $s$ (and keeping track of minima on the fly); similarly Case 2 can be handled by enumerating combinations of pairs at nodes $p$ and $q$ respectively. Finally, processing the leaves the solution of $O( |B| (2/\epsilon)^{\omega} )$ sketches.  \qed 
\hspace{.1in}\\
\noindent  It is folklore that a tree-decomposition of a graph $G$, of any given width, only requires a tree of size linear in the number of vertices of $G$.  Here we start with $m$ nodes (one for each constraint); that is prior to adding leaves corresponding to blocks, and prior to reducing node degrees to at most three.  The former adds up to $|B|$ nodes and the latter increases the node count by a constant factor.  Thus $N = O(m + |B|)$ nodes.

\newpage

\section{NP-hardness}\label{sec:nphard}
The dynamic-programming algorithm given above is only of pseudopolynomial complexity even for fixed $\omega$.  Here we show that even highly restricted versions of the problem \eqref{QuadBlockInd} are already (weakly) NP-hard. Note that the literature contains a number of proofs of NP-hardness for cardinality-constrained optimization problmems (see, e.g., \cite{Bienstock1996} for an old proof).  

The proofs below will rely on the \textsc{SUBSET SUM} problem: given a list of $n+1$ positive integers $a_0, a_1, \ldots, a_n$ and an integer $N \le n$, find set of indices $J$ with $|J| \le N$ s.t. $\sum_{j \in J} a_j \ = \ a_0$. \\

Given an instance of \textsc{SUBSET SUM}, where without loss of generality $a_j > 1$ for all $j$, consider the following (feasibility-only) instance of \eqref{QuadBlockInd} on $5n$ continuous variables  
$s_{1j}, s_{2j}, t_{1j}, t_{2j}, x_j$ for $1 \le j \le n$.   
\begin{subequations}\label{eq:w3blocks}
\begin{align}
& \text{for $1 \le j \le n$:} \nonumber \\
& \quad s_{1j} + s_{2j} &  \ = \ 1 \\
& \quad s_{1j} - (a_j - 1) s_{2j} &    + x_j  = \ 2 \label{eq:alpha2}\\
& \quad t_{1j} + t_{2j} & \quad  \ = \ 1 \\
&  \quad  t_{1j} - (a_j - 1) t_{2j} & + \, x_j  \ = \ 2 \label{eq:beta2} \\
& \text{and} \nonumber \\
& \quad \sum_{j = 1}^n x_j \ = \ n + a_0,  \label{eq:w3sum}\\
& \quad \text{at most $3n$ of the $s, t, x$ variables can be nonzero.} \label{eq:w3card}
\end{align}
\end{subequations}
Thus, in terms of the formal statement for problem
\eqref{QuadBlockInd}, each continuous variable constitutes a block, and we need $5n$ corresponding binary variables.   The following is a proof of Lemma \ref{le:width2}. 
\begin{LE}\label{le:staircase} If there is a feasible solution to \eqref{eq:w3blocks}-\eqref{eq:w3card}, then there is a feasible solution to the 
instance of \textsc{SUBSET SUM}; and conversely. 
\end{LE}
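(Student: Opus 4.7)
My plan is to exploit the cardinality bound \eqref{eq:w3card} to force, in every feasible solution, each gadget $j$ into a binary choice, and then to read off a SUBSET SUM solution from the aggregate equation \eqref{eq:w3sum}. The reverse direction is a direct construction.

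First I would substitute $s_{1j} = 1 - s_{2j}$ into \eqref{eq:alpha2} and $t_{1j} = 1 - t_{2j}$ into \eqref{eq:beta2}. Using the nonnegativity of all variables (which comes from Assumption~\ref{as:scaling} after appropriate rescaling to accommodate the large values taken by the $x_j$), this yields $x_j = 1 + a_j s_{2j} = 1 + a_j t_{2j}$, so $s_{2j} = t_{2j}$, which I call $\lambda_j \in [0,1]$, and $x_j = 1 + a_j \lambda_j \geq 1 > 0$ for every $j$.

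Next I would carry out a nonzero-counting argument per gadget. Since $x_j > 0$ always contributes one nonzero, and each of $s_{1j} + s_{2j} = 1$, $t_{1j} + t_{2j} = 1$ forces at least one nonzero, the count per gadget is at least $3$, with equality exactly when $\lambda_j \in \{0,1\}$; otherwise all four of $s_{1j}, s_{2j}, t_{1j}, t_{2j}$ are nonzero and the count is $5$. Summing over $j$, the total is $3n + 2k$ where $k = |\{j : \lambda_j \in (0,1)\}|$, so the cardinality bound \eqref{eq:w3card} of $3n$ forces $k = 0$. Hence $\lambda_j \in \{0,1\}$ and $x_j \in \{1, 1 + a_j\}$ for every $j$. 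Letting $J = \{j : x_j = 1 + a_j\}$, equation \eqref{eq:w3sum} becomes $n + \sum_{j \in J} a_j = n + a_0$, so $J$ is an affirmative SUBSET SUM solution.

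For the converse, given $J$ with $\sum_{j \in J} a_j = a_0$, I would set $(s_{1j}, s_{2j}, t_{1j}, t_{2j}, x_j) = (0,1,0,1,1+a_j)$ for $j \in J$ and $(1,0,1,0,1)$ otherwise, and verify the four equations of \eqref{eq:w3blocks}, the sum \eqref{eq:w3sum}, and the cardinality \eqref{eq:w3card} (with exactly $3n$ nonzeros) by direct substitution. The main obstacle is the tightness of the nonzero-counting argument; one must be sure that a strictly fractional $\lambda_j$ always costs \emph{two} extra nonzeros (not one), so that the bound $3n$ truly forces integrality of every $\lambda_j$. Once that integrality is secured, the equivalence with SUBSET SUM reduces to a one-line reading of \eqref{eq:w3sum}.
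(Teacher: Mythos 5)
Your proof is correct and follows essentially the same route as the paper's: a per-gadget nonzero count against the cardinality budget of $3n$ forces $s_{2j}=t_{2j}\in\{0,1\}$ for every $j$, after which \eqref{eq:w3sum} directly yields the \textsc{SUBSET SUM} solution, and the converse is the same explicit construction with exactly $3n$ nonzeros. The only cosmetic difference is that you invoke nonnegativity of the variables (legitimate within the QIB framing) to get $x_j\ge 1$ and the exact count $3n+2k$, whereas the paper's case analysis needs no sign assumptions and only the bound of at least four nonzeros per ``fractional'' gadget; either way the budget $3n$ forces integrality.
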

\noindent {\em Proof.} Suppose we have a feasible solution $s, t, x$ to \eqref{eq:w3blocks}-\eqref{eq:w3card}.  Consider an index $1 \le j \le n$.  Note that 
at least one of $s_{1j}, s_{2j}$ must be nonzero.  If only one of them is nonzero, then $x_j \neq 0$.  Likewise with $t_{1j}, t_{2j}$.  In summary,  when precisely one of 
$s_{1j}, s_{2j}$ and one of $t_{1j}, t_{2j}$ is nonzero we count exactly three nonzeros, and otherwise $s_{1j}, s_{2j}, t_{1j}, t_{2j}$ are all nonzero, i.e., we count
at least four nonzeros.

Let $n_4$ be the number of indices $j$ such that we count at least four nonzeros.  It follows that the total number of nonzeros in the 
given solution to \eqref{eq:w3blocks} is at least $3n + n_4$.  By
\eqref{eq:w3card} we therefore have $n_4 = 0$. Hence, as summarized,
either $s_{1j} = 1$ and $s_{2j} = 0$ or viceversa.  In the former
case $x_j = 1$ while in the latter, $x_j = w_j + 1 > 1$. So
$$ \sum_{j = 1}^n x_j \ = \ n \, + \, \sum_{j \, : \, s_{2j} = 1} w_j$$
which equals $n + a_0$ by \eqref{eq:w3sum}.  Hence the set $\{j \, : \, s_{2j} = 1\}$ constitutes a solution to the instance of \textsc{SUBSET SUM}. 

For the converse, assume that $J \subseteq \{1, \ldots, n\}$ is feasible for the instance of \textsc{SUBSET SUM}.  Then, whenever
$j \in J$ we set $s_{1j} = t_{1j} = 0$, $s_{2j} = t_{2j} = 1$ and $x_j = w_j + 1$.  And whenever $j \notin J$ we set $s_{1j} = t_{1j} = 1$, $s_{2j} = t_{2j} = 0$ and $x_j =1$. \qed 

This result shows that problem \eqref{eq:w3blocks}  is at least as hard as \textsc{SUBSET SUM}.  Note, moreover, that the constraint-block intersection graph for this system has treewidth (in fact, pathwidth) $2$ (constraints \eqref{eq:alpha2}, \eqref{eq:beta2} and \eqref{eq:w3sum} form a clique).  The case with pathwidth $1$ remains open.

\hspace{.1in}\\
Consider, next, the following special case of QIB. We are given a {\boldmath$2\times n$ \unboldmath} matrix $A$, a vector $b \in \bR^2$, an integer $N \le n$ and values $d_j \ge 0$ for $1 \le j \le n$.  The problem is:
\begin{subequations}\label{2-row}
    \begin{align}
\text{2ROW:} \quad & \min \sum_j d_j x^2_j \\
\text{s.t.}  & \quad  \sum_{j = 1}^n a_{ij} x_j \, = \, b_i, \ \ i=1,2, \\
& \quad \text{at most $N$ $x_j$ are nonzero.}
\end{align}
\end{subequations}
This is an example of \eqref{QuadBlockInd} with $m = 2$ constraints; we will show next that \eqref{2-row} is weakly NP-hard. \footnote{The reader may notice the absence of bounds on the $x$ variables. However, the proof shows that $0 \le x_j \le 1$ for all $j$ can be assumed.} 

Given an instance of \textsc{SUBSET-SUM}, consider the following instance of 2ROW:
\begin{subequations} \label{eq:2rowproof}
\begin{align}
   V^* \ \doteq \ \min \; & \ \sum_{j = 1}^n  x_{j}^2  \\
    \text{s.t. } & \sum_{j = 1}^n x_{j} = N \label{eq:selector} \\
    & \ \sum_{j = 1}^n a_{j} x_{j} = a_0 \label{eq:suma}\\
    & \ \exists \, J \subseteq\{1, \ldots, n\} \ \text{with $|J| \le N$ s.t.} \ x_{j} = 0 \ \text{if} \ j \notin J. \label{eq:subset}
\end{align}
\end{subequations}

\noindent The next result is a proof of Lemma \ref{le:2rowcard}.
\begin{LE}\label{le:width2linear}
There is a subset $J\subseteq \{1, \ldots, n\}$ with $|J| = N$ such that $\sum_{j \in J} a_j = a_0$ if and only if $V^* \le N$. 
\end{LE}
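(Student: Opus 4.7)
The plan is to prove the biconditional by separate arguments for the two directions. The forward direction is essentially by exhibiting a witness, and the reverse direction uses the Cauchy--Schwarz inequality (equivalently, convexity of $t \mapsto t^2$) together with the cardinality restriction to force any low-objective feasible solution to be $\{0,1\}$-valued.

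For the forward ($\Rightarrow$) direction, suppose $J \subseteq \{1,\ldots,n\}$ with $|J| = N$ and $\sum_{j \in J} a_j = a_0$. Setting $x_j = 1$ for $j \in J$ and $x_j = 0$ otherwise satisfies \eqref{eq:selector} (since $\sum_j x_j = |J| = N$), satisfies \eqref{eq:suma} by hypothesis, and has support $J$ of size $N$, so \eqref{eq:subset} holds. The objective value is $N$, hence $V^* \le N$.

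For the reverse ($\Leftarrow$) direction, let $x^*$ be a feasible solution with $\sum_j (x^*_j)^2 \le N$, and write $J^* = \{j : x^*_j \ne 0\}$, so $|J^*| \le N$. By Cauchy--Schwarz applied to $(x^*_j)_{j \in J^*}$ and the all-ones vector,
\[
N \;\ge\; \sum_{j \in J^*} (x^*_j)^2 \;\ge\; \frac{\bigl(\sum_{j \in J^*} x^*_j\bigr)^2}{|J^*|} \;=\; \frac{N^2}{|J^*|} \;\ge\; N,
\]
where the middle equality uses \eqref{eq:selector} and the final inequality uses $|J^*| \le N$. Thus the chain is tight throughout: tightness of Cauchy--Schwarz forces $x^*_j$ to be constant on $J^*$, tightness of $N^2/|J^*| \ge N$ forces $|J^*| = N$, and then $\sum_{j \in J^*} x^*_j = N$ pins the common value to $1$. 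Substituting $x^*_j = \mathbf{1}[j \in J^*]$ into \eqref{eq:suma} gives $\sum_{j \in J^*} a_j = a_0$, so $J^*$ is the desired subset of size $N$.

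The only real subtlety is recognizing that $V^* \le N$ simultaneously saturates three distinct inequalities in the displayed chain, and then translating these tightness conditions into integrality of the minimizer. Once that step is in hand, extraction of the subset-sum witness is immediate, and no further calculation is needed.
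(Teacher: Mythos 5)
Your proof is correct and follows essentially the same route as the paper: the forward direction exhibits the $\{0,1\}$ witness, and the reverse direction is the paper's observation that the minimum of $\sum_{j\in S} y_j^2$ under a fixed sum and $|S|\le N$ is $N^2/|S|\ge N$, attained only at the constant vector -- you merely phrase this via Cauchy--Schwarz with tightness instead of citing the unique minimizer directly. No gaps; nothing further is needed.
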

\noindent {\em Proof.}      If such a set $J$ exists, then set $x_{j}  = 1$ for $j \in J$ and $x_{j} = 0$ otherwise. This solution is feasible for \eqref{eq:2rowproof} and yields an objective of $N$.  
     
     For the converse, assume $V^* \le N$ and let $(x^*, J^*)$ denote an optimal solution to \eqref{eq:2rowproof}. Note   that  the optimal solution for a problem of the form $\min\{\sum_{j \in S} y_j^2: \sum_{j \in S} y_j = 1\}$  is uniquely attained by setting $y_j = 1/|S|$ for all $j \in S$.  Hence \eqref{eq:selector} and \eqref{eq:subset} imply that 
     $\sum_{j = 1}^n x^{*2} \ge N$.  Thus, $V^* \le N$ implies $V^* = N$, $|J^*| = N$ and $x^*_j = 1$ for all $j \in J^*$. \qed

%
%
\section{Glossary}
\label{sec:glossary}
\noindent  \boldmath $B$. \unboldmath The set of blocks.\\
\noindent \boldmath $\bB^r$. \unboldmath  The set of blocks incident with a constraint $r$.  Definition \ref{def:incidence}.\\
\noindent \boldmath $\beta_t$. \unboldmath  In a tree decomposition $(T,X)$, the set of blocks contained in $T_t$. \\ 
\noindent \boldmath $\bC_t$. \unboldmath In a tree decomposition $(T,X)$, the set of combinatorial constraints contained in a bag $X_t$.\\
\noindent \boldmath $G$. \unboldmath The constraint-block intersection graph.\\
\noindent \boldmath $\kappa$. \unboldmath The largest support of any combinatorial constraint.\\
\noindent \boldmath $K_i$.  \unboldmath The clique of the constraint-block intersection graph induced by block $B_i$.\\
\noindent \boldmath $m_c$. \unboldmath The number of combinatorial constraints.\\
\noindent \boldmath $\cM_i$. \unboldmath The set of mixed-integer constraints incident with block $B_i$.  \\
\noindent \boldmath $\bM_t$. \unboldmath In a tree decomposition $(T,X)$, the set of mixed-integer constraints contained in a bag $X_t$.\\
\noindent {\bf node}.  A vertex of the tree $T$ used in a tree-decomposition.\\
\noindent \boldmath $\Pi$. \unboldmath A pair
$( \{ \hat k_r  \, \in \bZ \, : \, r \in \bM_t \}, \{ \hat \kappa_r \, \in \bZ \, : \, r \in \bC_t\})$ as in Definition \ref{def:partialensemble}.  Consistency will be applied (see the definition).\\
\noindent \boldmath $\bR_t$. \unboldmath In a tree decomposition $(T,X)$, $\bC_t \cup \bR_t$.\\
\noindent \boldmath $T_t$.  \unboldmath The subtree of the (rooted) tree $T$ that is rooted at node $t$.\\
\noindent \boldmath $X_t$.  \unboldmath In a tree-decomposition $(T,X)$, the set of vertices (``bag'') corresponding to node $t$.\\
\noindent \boldmath $\zeta_t$. \unboldmath Given a  pair $\Pi = ( \{ \hat k_r  \, : \, r \in \bM_t \}, \{ \hat \kappa_r \, : \, r \in \bC_t\})$,  $\zeta_t(\Pi)$  is the value of the optimization problem in Definition \ref{def:partialensemble}.  Definition \ref{def:zeta}.\\ \\
\noindent {\bf containment}. A block $B_i$ is contained in $T_t$ if the set of nodes $q$ of $T$ s.t. $B_i \subseteq X_q$ is contained in $T_t$.  A constraint $r$ is contained in $T_t$ if the set of nodes $q$ of $T$ s.t. $r \in X_q$ is contained in $T_t$.

\newpage

\bibliographystyle{splncs04.bst}
\bibliography{references}

\end{document}